\renewcommand{\subset}{\subseteq}
\newtheorem{theorem}            {Theorem}[section]
\newtheorem{corollary}          [theorem]{Corollary}
\newtheorem{definition}         [theorem]{Definition}
\newtheorem{lemma}              [theorem]{Lemma}
\newtheorem{remark}         [theorem]{Remark}
\newcommand{\df}[1]{{\it{#1}}{\index{#1}}}
\begin{document}

\setcounter{page}{1}
\title[Multipliers and Dirichlet Series]{Multipliers on Hilbert Spaces of Dirichlet Series}

\author[Stetler]{Eric Stetler}
\address{Department of Mathematics\\
  University of Florida 
   }
   \email{estetler@ufl.edu}
\subjclass[20]{47Axx (Primary).
}

\keywords{Multipliers, Dirichlet Series, Reproducing Kernel Hilbert Spaces}

\date{\today}

\maketitle

\begin{abstract}
	In this paper, certain classes of Hilbert spaces of Dirichlet series with weighted norms and their corresponding multiplier algebras will be explored. For a sequence $\{w_n\}_{n=n_0}^\infty $ of positive numbers, define
		\[\mathcal H^\textbf{w}=\left\{\sum_{n=n_0}^\infty  a_nn^{-s}:\sum_{n=n_0}^\infty |a_n|^2 w_n<\infty\right\}.\]
	Hedenmalm, Lindqvist and Seip considered the case in which $w_n\equiv 1$ and classified the multiplier algebra of $\mathcal H^\textbf{w}$ for this space in \cite {HLS}. In \cite{M}, McCarthy classified the multipliers on $\mathcal H^\textbf{w}$ when the 
	weights are given by
		\[w_n=\int_0^\infty  n^{-2\sigma}d\mu(\sigma),\]
	where $\mu$ is a positive Radon measure with $\{0\}$ in its support and $n_0$ is the smallest positive integer for which this integral is finite. Similar results will be derived assuming the weights 
	are  multiplicative, rather than given by a measure. In particular, upper and lower bounds on the operator norms of the multipliers will be obtained, in terms of their values on certain half planes, on the Hilbert spaces resulting from these weights. Finally, 
	some number theoretic weight sequences will be explored and the multiplier algebras of the corresponding Hilbert spaces determined up to isometric isomorphism, providing examples where the conclusion of McCarthy's result holds, but  under alternate 
	hypotheses on the weights. 
\end{abstract}

\section{Introduction}
\label{sec:intro}

	A \df{Dirichlet series} is a series of the form
		\begin{equation}
			\label{eq:Dirichlet definition}
				\sum_{n=1}^\infty  a_n n^{-s},
		\end{equation}
	where the $a_n$'s and $s$  are complex numbers. Such a series may or may not converge, depending on the $a_n$'s and the choice for $s$. For example,  if $a_n=n!$ for each $n$, then the series fails to
	converge anywhere. If instead $a_n$ is nonzero for only finitely many $n$, then the series converges everywhere. It turns out that if a Dirichlet series converges for some complex number $s_{0}$, then it
	must converge for all complex $s$ with real part, $\mathfrak{Re}(s)$, strictly larger than $s_{0}$. Given a real number $\delta$, let $\Omega_{\delta}$ denote the open half-plane
  		\[\left\{z\in\mathbb C:\mathfrak{Re}(z)>\delta \right\}.\]
	By the preceding remark, if a Dirichlet series converges at $s_{0}$, then it converges in $\Omega_{\mathfrak{Re}(s_{0})}$.\\

	We  are going to be concerning ourselves with Hilbert spaces of functions representable by Dirichlet series in some open half-plane. Denote by $\mathcal D$ the set of those functions whose domain contains some open right half-plane (which right 
	half-plane will generally depend on the function) in which the function is representable by a Dirichlet series.  Let  $\mathcal D_\delta$ denote the set of functions that can be represented by a Dirichlet series specifically in the right half-plane 
	$\Omega_\delta$.

	Let  $\{w_n\}_{n=n_0}^\infty $ be a sequence of positive real numbers and suppose that there is a positive real number $\sigma_0$ such that 
		\begin{equation}
			\label{eq:weight condition}
				\sum_{n=n_0}^\infty  w_n^{-1}n^{-2\sigma}<\infty
		\end{equation}
	whenever $\sigma>\sigma_0$. Let $\mathcal H^\textbf{w}$ denote the Hilbert space
		\begin{equation}
			\label{eq:H^w definition}
				\left\{\sum_{n=n_0}^\infty  a_n n^{-s}:\sum_{n=n_0}^\infty |a_n|^2 w_n<\infty\right\},
		\end{equation}
	with inner product $\langle\cdot,\cdot\rangle_\textbf{w}$ defined by 
		\[\langle \sum_{n=n_0}^\infty  a_n n^{-s},\sum_{m=n_0}^\infty  b_m m^{-s}\rangle_\textbf{w}=\sum_{n=n_0}^\infty  a_n\overline{b_n} w_n.\]
	For simplicity, we will take $n_0=1$ unless otherwise stated. Note that \eqref{eq:weight condition}, along with the Cauchy-Schwarz inequality, implies that $\mathcal H^\textbf{w}\subset\mathcal{D}_{\sigma_0}$.\\

	A \df{multiplier} on $\mathcal H^\textbf{w}$ is a function $\varphi$, with domain containing the half-plane $\Omega_{\sigma_0}$, with the property that $\varphi f\in\mathcal H^\textbf{w}$ for each $f\in\mathcal H^\textbf{w}$. Thus, $\varphi$	
	determines a mapping $M_\varphi:\mathcal H^\textbf{w}\longrightarrow\mathcal H^\textbf{w}$ defined by $M_\varphi f=\varphi f$ and which, by a standard application of the closed graph theorem (see Lemma \ref{lem:M is bounded} below), is a 	
	bounded operator. Further, (see Lemma \ref{prop:multiplier in H^w} below), $\varphi$ is representable by a Dirichlet series in $\Omega_{\sigma_0}$. Let $\mathfrak{M}$ denote the algebra of multipliers of $\mathcal H^{\textbf{w}}$ viewed as 
	subalgebra of the algebra of bounded operators on $\mathcal H^{\textbf{w}}$.  \\

	In \cite{HLS}, Hedenmalm, Lindqvist and Seip (HLS) classified the multipliers on $\mathcal H^\textbf{w}$ in the case that $w_n\equiv 1$ by showing that the multipliers on $\mathcal H^\textbf{w}$ were precisely the bounded, analytic functions 
	representable by a Dirichlet series in some right half-plane. More precisely,
	
	\begin{theorem}[\cite{HLS}]
		\label{HLS's theorem}
			If $w_n\equiv 1$ for each $n$, then 
				\begin{equation}
					\label{eq:isometric isomprhism}
						\mathfrak{M}\equiv H^\infty(\Omega_0)\cap\mathcal D,
				\end{equation}
			where $H^\infty(\Omega_0)$ is the space of bounded, analytic functions on $\Omega_0$ and $\mathcal D$ is the space of functions representable by a Dirichlet series in some right half-plane.
	\end{theorem}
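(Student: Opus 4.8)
The plan is to establish the isometric identification \eqref{eq:isometric isomprhism} by transporting the whole question to the infinite polytorus through the Bohr correspondence. When $w_n\equiv 1$ one has $\sigma_0=1/2$, and $\mathcal H^\textbf{w}$ is the Hardy space $\mathcal H^2$ of square-summable Dirichlet series; it is a reproducing kernel Hilbert space with kernel $k_s(z)=\zeta(z+\overline s)$ for $s\in\Omega_{1/2}$, so that $\Norm{k_s}_\textbf{w}^2=\zeta(2\,\mathfrak{Re}(s))$ and $\Norm{n^{-s}}_\textbf{w}=1$ for every $n$. Factoring each $n$ as $n=p_1^{\alpha_1}\cdots p_j^{\alpha_j}$ and sending $n^{-s}$ to the monomial $z_1^{\alpha_1}\cdots z_j^{\alpha_j}$ extends linearly to a unitary map $\mathcal B$ of $\mathcal H^2$ onto the Hardy space $H^2(\mathbb{T}^\infty)$ of the infinite polytorus which is multiplicative, in the sense that $\mathcal B(fg)=(\mathcal B f)(\mathcal B g)$ whenever $f$, $g$ and $fg$ all lie in $\mathcal H^2$. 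Two classical theorems of Bohr --- that the abscissa of uniform convergence of a Dirichlet series equals its abscissa of bounded holomorphy, and that it exceeds the abscissa of absolute convergence by at most $1/2$ --- show that $H^\infty(\Omega_0)\cap\mathcal D$ coincides with the class $\mathcal H^\infty$ of Dirichlet series converging to a bounded function on $\Omega_0$, normed by the supremum over $\Omega_0$; and, as I sketch below, $\mathcal B$ restricts to an isometric algebra isomorphism of $\mathcal H^\infty$ onto $H^\infty(\mathbb{T}^\infty):=H^2(\mathbb{T}^\infty)\cap L^\infty(\mathbb{T}^\infty)$. Granting this dictionary, \eqref{eq:isometric isomprhism} reduces to the statement that the multipliers of $H^2(\mathbb{T}^\infty)$ are exactly $H^\infty(\mathbb{T}^\infty)$, with matching operator norms: indeed, if $\varphi\in\mathfrak M$ then $\varphi=M_\varphi 1\in\mathcal H^2$ (here $1=1^{-s}\in\mathcal H^2$ with $\Norm{1}_\textbf{w}=1$, and $M_\varphi$ is bounded by the closed graph theorem), hence $\varphi\in\mathcal D$, and $\mathcal B$ conjugates $M_\varphi$ to the operator of multiplication by $\Phi:=\mathcal B\varphi$ on $H^2(\mathbb{T}^\infty)$.

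So I would next prove the polytorus statement: a bounded operator $M$ on $H^2(\mathbb{T}^\infty)$ that commutes with multiplication by each coordinate $z_j$ --- which is exactly what a multiplier of $H^2(\mathbb{T}^\infty)$ is --- equals multiplication by some $\Phi\in H^\infty(\mathbb{T}^\infty)$, with $\Norm M=\Norm{\Phi}_{L^\infty(\mathbb{T}^\infty)}$. One inclusion is immediate: pointwise multiplication by $\Phi\in L^\infty(\mathbb{T}^\infty)$ is bounded on $L^2(\mathbb{T}^\infty)$ with norm $\Norm\Phi_\infty$, and when $\Phi\in H^2(\mathbb{T}^\infty)$ it preserves the nonnegative-frequency subspace. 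For the reverse, put $\Phi:=M1\in H^2(\mathbb{T}^\infty)$; then $M$ agrees with pointwise multiplication by $\Phi$ on trigonometric polynomials, and an induction --- approximating $\Phi^k$ in $L^2(\mathbb{T}^\infty)$ by polynomials and passing to the $L^1$-limit via Cauchy--Schwarz --- gives $M^k 1=\Phi^k$, hence $\Norm{\Phi^k}_{L^2}=\Norm{M^k 1}_{L^2}\le\Norm M^k$. Since $\Norm{\Phi^k}_{L^2}=\Norm{\Phi}_{L^{2k}}^k$, letting $k\to\infty$ gives $\Phi\in L^\infty(\mathbb{T}^\infty)$ with $\Norm\Phi_\infty\le\Norm M$; by density of polynomials $M$ is then multiplication by $\Phi$, and the two inequalities give $\Norm M=\Norm\Phi_\infty$. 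Transporting back along $\mathcal B$, this is precisely the claim that $\varphi\in\mathfrak M$ if and only if $\varphi\in\mathcal H^\infty$, with $\Norm{M_\varphi}=\Norm\varphi_{H^\infty(\Omega_0)}$, i.e.\ \eqref{eq:isometric isomprhism}.

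It remains to justify the dictionary, which is where the real work lies. The equality $H^\infty(\Omega_0)\cap\mathcal D=\mathcal H^\infty$ follows from Bohr's uniform-convergence theorem together with the identity theorem. For the isometry $\mathcal H^\infty\cong H^\infty(\mathbb{T}^\infty)$: given $\varphi\in\mathcal H^\infty$ with $M:=\sup_{\Omega_0}\norm\varphi$, Bohr's theorem makes its Dirichlet series converge uniformly on every vertical line in $\Omega_0$, so for each $\delta>0$ the trace $t\mapsto\varphi(\delta+it)$ is a uniform limit of exponential sums with frequencies in $\bigoplus_j\mathbb Z\log p_j$, hence uniformly almost periodic, hence the restriction to $\{(p_j^{-it})_j:t\in\mathbb R\}$ of a continuous function $\Phi_\delta$ on $\mathbb{T}^\infty$; since that set is dense in $\mathbb{T}^\infty$ by Kronecker's theorem --- the $\log p_j$ being rationally independent --- one gets $\Norm{\Phi_\delta}_\infty\le M$, and letting $\delta\downarrow 0$ and extracting a weak-$*$ limit in $L^\infty(\mathbb{T}^\infty)$ produces $\mathcal B\varphi\in H^\infty(\mathbb{T}^\infty)$ with $\Norm{\mathcal B\varphi}_\infty\le M$. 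Conversely, every $\Phi\in H^\infty(\mathbb{T}^\infty)$ extends to a bounded holomorphic function, of the same supremum norm, on the open unit ball of $c_0$; composing that extension with the holomorphic map $s\mapsto(p_j^{-s})_j$, which carries $\Omega_0$ into that ball since there it has norm $2^{-\mathfrak{Re}(s)}<1$, yields a bounded holomorphic function on $\Omega_0$ that on $\Omega_1$ equals the absolutely convergent Dirichlet series built from the coefficients of $\Phi$; thus $\mathcal B^{-1}\Phi\in\mathcal H^\infty$ with $\Norm{\mathcal B^{-1}\Phi}_{H^\infty(\Omega_0)}\le\Norm\Phi_\infty$, and together these inequalities give the isometry.

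The genuine obstacle is exactly this passage between half-planes. The space $\mathcal H^2$ lives on $\Omega_{1/2}$, where point evaluation is bounded and the adjoint identity $M_\varphi^{*}k_s=\overline{\varphi(s)}\,k_s$ already forces $\norm{\varphi(s)}\le\Norm{M_\varphi}$ there; but a generic element of $\mathcal H^2$ need not extend past $\Omega_{1/2}$ at all, so Hilbert-space arguments alone say nothing about the behaviour of a multiplier on the strip $0<\mathfrak{Re}(s)\le 1/2$. The polytorus model is what makes that strip visible, at the price of two genuinely non-elementary inputs: Bohr's two theorems on the abscissae of convergence, which are what let $\mathcal B$ interact correctly with boundedness on half-planes, and the two-sided identification of $\mathcal H^\infty$ with $L^\infty(\mathbb{T}^\infty)$, resting on Kronecker's (and Weyl's) equidistribution theorem in one direction and on the Cole--Gamelin extension of $H^\infty(\mathbb{T}^\infty)$ to the ball of $c_0$ in the other. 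Everything downstream of these --- the kernel computation, the ``powers'' estimate $\Norm{\Phi}_{L^{2k}}=\Norm{M^k 1}_{L^2}^{1/k}\le\Norm M$, and the elementary theory of multiplication operators on $H^2(\mathbb{T}^\infty)$ --- is routine.
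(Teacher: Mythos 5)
The paper does not actually prove this statement: it is imported verbatim from \cite{HLS}, and the machinery the paper builds later (the projections $\pi_N$, the kernel/normal-families argument with Schnee's theorem in Theorem \ref{thm:lower bound}, and the dilation argument of Theorem \ref{thm:upper bound}, which uses this very theorem as its base case) is a weighted-space analogue of the HLS argument rather than a proof of it. Your proposal is a correct outline, but it is organized along the now-standard polytorus route rather than the half-plane/finite-variable route the paper mimics: you push everything through the Bohr lift, identify the multiplier algebra of $H^2(\mathbb{T}^\infty)$ with $H^\infty(\mathbb{T}^\infty)$ isometrically by the $M^k1=\Phi^k$, $\|\Phi\|_{L^{2k}}\le\|M\|$ trick, and then translate between $H^\infty(\mathbb{T}^\infty)$ and $H^\infty(\Omega_0)\cap\mathcal D$ using Bohr's theorem, Kronecker--Weyl almost periodicity in one direction, and the Cole--Gamelin extension to the ball of $c_0$ in the other. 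By contrast, the argument the paper's own generalization is patterned on stays in finitely many variables at a time (bound $\varphi$ on $\Omega_{1/2}$ by the kernel argument, bound the $N$-prime truncations on a finite polydisc via Kronecker, then pass to the limit and use a Bohr/Schnee-type convergence theorem to reach $\Omega_0$), which avoids any infinite-dimensional function theory. What your route buys is a clean isometric dictionary and an elementary multiplier computation on $\mathbb{T}^\infty$; what it costs is that the two-sided isometry $\mathcal H^\infty\equiv H^\infty(\mathbb{T}^\infty)$ is essentially as deep as the theorem itself, so your proof is complete only modulo the cited inputs (Bohr, Kronecker--Weyl, Cole--Gamelin) --- though the sketches you give for them are the correct ones, so this is a legitimate, non-circular proof. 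Two small points: the Bohr inequality you want is that the abscissa of \emph{absolute} convergence exceeds the abscissa of uniform convergence by at most $1/2$ (you stated it the other way around), and in the identification of the conjugated operator $\mathcal B M_\varphi\mathcal B^{-1}$ with multiplication by $\mathcal B\varphi$ you should note explicitly that it follows from commutation with the coordinate multiplications plus your density argument, exactly as in your polytorus lemma.
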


	In \cite{M}, McCarthy extended the results of HLS to weights given by a measure.

	\begin{theorem}[\cite{M}]
		\label{McCarthy's theorem}
			If $\mu$ is a positive Radon measure with $\{0\}$ in its support and if $w_n$ is defined by 
				\begin{equation}
					\label{eq:McCarthy's weights}
						w_n=\int_0^\infty  n^{-2\sigma}d\mu(\sigma),
				\end{equation}
			where $n_0$ is the smallest natural number for which this integral is finite, then
				\begin{equation}
					\label{eq:isometric isomprhism}
						\mathfrak{M}\equiv H^\infty(\Omega_0)\cap\mathcal D.
				\end{equation}
	\end{theorem}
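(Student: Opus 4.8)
The plan is to prove the two inclusions $\mathfrak M\supseteq H^\infty(\Omega_0)\cap\mathcal D$ and $\mathfrak M\subseteq H^\infty(\Omega_0)\cap\mathcal D$, with matching norm bounds, by feeding Theorem~\ref{HLS's theorem} (the case $w_n\equiv1$, whose space I denote $\mathcal H^{\mathbf 1}$) through one structural identity. For a Dirichlet series $g=\sum a_n n^{-s}$ and $\tau\ge0$ write $g_\tau(s):=g(s+\tau)=\sum a_n n^{-\tau}n^{-s}$. Then for $f=\sum a_n n^{-s}\in\mathcal H^\textbf{w}$ Tonelli gives
\[
\|f\|_\textbf{w}^2=\sum_n|a_n|^2 w_n=\int_0^\infty\Big(\sum_n|a_n|^2 n^{-2\sigma}\Big)d\mu(\sigma)=\int_0^\infty\|f_\sigma\|_{\mathcal H^{\mathbf 1}}^2\,d\mu(\sigma),
\]
so $f\mapsto(\sigma\mapsto f_\sigma)$ embeds $\mathcal H^\textbf{w}$ isometrically into the direct integral $\int^{\oplus}_{[0,\infty)}\mathcal H^{\mathbf 1}\,d\mu(\sigma)$, and multiplication by $\varphi$ becomes, on the image, the fibrewise family $\sigma\mapsto M_{\varphi_\sigma}$. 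I will use freely that a multiplier $\varphi$ is a Dirichlet series $\sum b_d d^{-s}$ on $\Omega_{\sigma_0}$ (Lemma~\ref{prop:multiplier in H^w}) and that a product of Dirichlet series is a Dirichlet series with coefficients given by Dirichlet convolution.

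For $H^\infty(\Omega_0)\cap\mathcal D\subseteq\mathfrak M$ with $\|M_\varphi\|_\textbf{w}\le\|\varphi\|_{H^\infty(\Omega_0)}$: each translate $\varphi_\sigma$ ($\sigma\ge0$) lies in $H^\infty(\Omega_{-\sigma})\cap\mathcal D\subseteq H^\infty(\Omega_0)\cap\mathcal D$ with $\|\varphi_\sigma\|_{H^\infty(\Omega_0)}\le\|\varphi\|_{H^\infty(\Omega_0)}$, hence by Theorem~\ref{HLS's theorem} is a multiplier of $\mathcal H^{\mathbf 1}$ of norm at most $\|\varphi\|_{H^\infty(\Omega_0)}$. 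Writing $\varphi f=\sum c_k k^{-s}$ and noting $(\varphi f)_\sigma=\varphi_\sigma f_\sigma$ coefficientwise, Tonelli and the identity above give $\sum_k|c_k|^2w_k=\int_0^\infty\|\varphi_\sigma f_\sigma\|_{\mathcal H^{\mathbf 1}}^2\,d\mu(\sigma)\le\|\varphi\|_{H^\infty(\Omega_0)}^2\|f\|_\textbf{w}^2$, so $\varphi f\in\mathcal H^\textbf{w}$ with the stated bound. For the reverse inclusion it suffices to show that every $\varphi\in\mathfrak M$ is already a multiplier of $\mathcal H^{\mathbf 1}$ with $\|M_\varphi\|_{\mathcal H^{\mathbf 1}}\le\|M_\varphi\|_\textbf{w}$: then Theorem~\ref{HLS's theorem} yields $\varphi\in H^\infty(\Omega_0)\cap\mathcal D$ with $\|\varphi\|_{H^\infty(\Omega_0)}=\|M_\varphi\|_{\mathcal H^{\mathbf 1}}\le\|M_\varphi\|_\textbf{w}$, which together with the previous inclusion forces $\|M_\varphi\|_\textbf{w}=\|\varphi\|_{H^\infty(\Omega_0)}$ and hence the isometric isomorphism.

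This is where $0\in\operatorname{supp}\mu$ enters. Let $f=\sum a_n n^{-s}$ be a Dirichlet polynomial supported on $n\ge n_0$; then $f\in\mathcal H^\textbf{w}$, and for every integer $D\ge n_0$ so is $D^{-s}f$, with $\varphi\,(D^{-s}f)=D^{-s}(\varphi f)$. Writing $\varphi f=\sum c_j j^{-s}$, $\Psi_f(\sigma):=\|f_\sigma\|_{\mathcal H^{\mathbf 1}}^2=\sum_n|a_n|^2 n^{-2\sigma}$ and $\Psi_g(\sigma):=\sum_j|c_j|^2 j^{-2\sigma}$, the multiplier inequality $\|\varphi\,(D^{-s}f)\|_\textbf{w}^2\le\|M_\varphi\|_\textbf{w}^2\|D^{-s}f\|_\textbf{w}^2$ unwinds through the identity above (using $w_{Dm}=\int D^{-2\sigma}m^{-2\sigma}\,d\mu$) to
\[
\int_0^\infty D^{-2\sigma}\Psi_g(\sigma)\,d\mu(\sigma)\le\|M_\varphi\|_\textbf{w}^2\int_0^\infty D^{-2\sigma}\Psi_f(\sigma)\,d\mu(\sigma),\qquad D\ge n_0 .
\]
Divide by $m_D:=\int_0^\infty D^{-2\sigma}\,d\mu(\sigma)$ and let $D\to\infty$. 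Because $0\in\operatorname{supp}\mu$, the probability measures $m_D^{-1}D^{-2\sigma}\,d\mu(\sigma)$ put asymptotically full mass on every $[0,\varepsilon)$: indeed $\int_\varepsilon^\infty D^{-2\sigma}\,d\mu\le(D/n_0)^{-2\varepsilon}w_{n_0}$, whereas $\int_0^{\varepsilon/2}D^{-2\sigma}\,d\mu\ge D^{-\varepsilon}\mu([0,\varepsilon/2))>0$. Since $\Psi_f$ is bounded and continuous, the right side tends to $\|M_\varphi\|_\textbf{w}^2\Psi_f(0)=\|M_\varphi\|_\textbf{w}^2\|f\|_{\mathcal H^{\mathbf 1}}^2$; since $\Psi_g$ is nonincreasing (and finite on $(0,\infty)$, as $\varphi f\in\mathcal H^\textbf{w}$ together with $0\in\operatorname{supp}\mu$ forces), the left side has $\liminf\ge\Psi_g(0^+)=\sum_j|c_j|^2=\|\varphi f\|_{\mathcal H^{\mathbf 1}}^2$. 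Thus $\|\varphi f\|_{\mathcal H^{\mathbf 1}}\le\|M_\varphi\|_\textbf{w}\|f\|_{\mathcal H^{\mathbf 1}}$ for all such $f$; taking $f=D^{-s}$ alone already gives $\varphi\in\mathcal H^{\mathbf 1}$, and a density and closed-graph argument, combined with the fact that multiplication by $q^{-s}$ is an isometry of $\mathcal H^{\mathbf 1}$ (so that high-frequency test functions govern the multiplier norm and low frequencies cause no trouble), upgrades this to $\varphi\in\mathfrak M(\mathcal H^{\mathbf 1})$ with $\|M_\varphi\|_{\mathcal H^{\mathbf 1}}\le\|M_\varphi\|_\textbf{w}$, completing the proof.

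The main obstacle I expect is this last limiting step: one must accommodate that $\Psi_g(\sigma)$ genuinely diverges as $\sigma\downarrow0$ (precisely unless the conclusion $\varphi f\in\mathcal H^{\mathbf 1}$ already holds), that $n_0$ may exceed $1$, and—above all—that interchanging $\lim_D$ with the two integrals is legitimate only after renormalizing by $m_D$ and is driven entirely by $0\in\operatorname{supp}\mu$; without that hypothesis the renormalized measures need not concentrate at $0$, and indeed must not, since the conclusion can then fail. The remaining ingredients—the direct-integral identity, Theorem~\ref{HLS's theorem}, the Dirichlet-product lemma, and closed-graph boundedness of $M_\varphi$—are routine or already available.
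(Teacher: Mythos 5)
Your argument is correct, and while one half of it coincides with the paper's, the other half is a genuinely different route. For the containment $H^\infty(\Omega_0)\cap\mathcal D\subseteq\mathfrak M$ with $\|M_\varphi\|\le|\varphi|_{\Omega_0}$, your translate map $f\mapsto(\sigma\mapsto f_\sigma)$ into $\int^\oplus\mathcal H^{\mathbf 1}\,d\mu$ is the same device as the paper's Section \ref{sec:McCarthy} construction (there phrased as the isometry $W$ embedding $\mathcal H^\textbf{w}$ into $\mathcal F^2$ with fibres $\mathcal H^\delta$ and the fibrewise operator $J$), with the fibrewise use of Theorem \ref{HLS's theorem} playing the identical role. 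The real divergence is in the reverse inequality $|\varphi|_{\Omega_0}\le\|M_\varphi\|$: the paper gets it from Theorems \ref{thm:growth rate theorem} and \ref{thm:lower bound}, i.e.\ from the projections $\pi_N$ onto $\langle P_N\rangle$-supported series, a kernel--eigenfunction estimate, a normal-families argument, and Schnee's theorem (Theorem \ref{thm:schnee}) to push convergence of the Dirichlet series of $\varphi$ into $\Omega_0$; you instead transfer the multiplier directly to the unweighted space by testing on $D^{-s}f$, normalizing by $m_D=\int D^{-2\sigma}d\mu$, and using $0\in\operatorname{supp}\mu$ to make the measures $m_D^{-1}D^{-2\sigma}d\mu$ concentrate at $0$, so that $\|\varphi f\|_{\mathcal H^{\mathbf 1}}\le\|M_\varphi\|\,\|f\|_{\mathcal H^{\mathbf 1}}$ and Theorem \ref{HLS's theorem} then supplies both the convergence of $\varphi$ on $\Omega_0$ and the norm identity. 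Your Abelian/concentration limit is sound (the monotonicity of $\Psi_g$, its finiteness on $(0,\infty)$, and the two mass estimates are exactly what is needed), and the final upgrade via shift-invariance of $\mathcal H^{\mathbf 1}$ and density of Dirichlet polynomials, though only sketched, is routine. What each approach buys: yours is closer in spirit to McCarthy's original argument, avoids Schnee's theorem entirely, and leans maximally on the known $w_n\equiv1$ case; the paper's machinery is heavier here but is built to apply to weights that do not come from measures at all, which is the point of its later sections. One caveat: the paper's own proof deliberately restricts to probability measures (so $n_0=1$), and your handling of general $n_0$ quietly relies on Lemma \ref{prop:multiplier in H^w} (that a multiplier is an ordinary Dirichlet series), whose $n_0>1$ case the paper explicitly declines to prove; if you want the full generality of the statement you should either supply that argument or impose the same normalization the paper does.
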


	In both of the above theorems, $\mathfrak{M}\equiv H^\infty(\Omega_0)\cap\mathcal D$ means $\mathfrak{M}$ is isometrically isomorphic to $H^\infty(\Omega_0)\cap\mathcal D$.\\
	
	In this article, estimates on the norms of multipliers on a class of weighted Hilbert spaces of Dirichlet series are established in the cases that the weights satisfy some basic conditions. The weights  here are complementary to those  generated by measures 	
	as considered in \cite{M}, overlapping only in the most trivial cases. Examples of weights meeting these conditions include the reciprocals of the divisor function, the sum of the divisors function and Euler's totient function. Indeed, in each of these cases 
	the norm of a multiplier is identified as the supremum norm of the symbol $\varphi$ over a right half plane.\\

	The Cauchy-Schwarz inequality tells us that norm convergence in $\mathcal H^\textbf{w}$ implies pointwise convergence in $\Omega_{\sigma_0}$ and thus, each point $u\in\Omega_{\sigma_0}$ determines a bounded point evaluation on $\mathcal 
	H^\textbf{w}$. The Riesz representation theorem then guarantees the existence of some function $k_u$ in $\mathcal H^\textbf{w}$ such that 
		\[f(u)=\langle f,k_u\rangle_\textbf{w}\]
	for each $f\in\mathcal H^\textbf{w}$. The function $k:\Omega_{\sigma_0}\times\Omega_{\sigma_0}\longrightarrow\mathbb C$ defined by $k(u,z)=k_z(u)$ is called the kernel of $\mathcal H^\textbf{w}$. Since the set 
	$\left\{w_n^{1/2}n^{-s}:n=1,2,...\right\}$ forms an orthonormal basis for $\mathcal H^\textbf{w}$, we have 
		\[k_z(u)=\langle k_z,k_u\rangle_\textbf{w}=\sum_{n=1}^\infty \langle k_z,w_n^{1/2}n^{-s}\rangle_\textbf{w}\langle w_n^{1/2}n^{-s},k_u\rangle_\textbf{w}.\]
	Working out this right-most sum yields
		\begin{equation}
			\label{eq:kernel}
				 k (u,z)=\overline{k_u(z)}=\sum_{n=1}^\infty  w_n^{-1}n^{-u-\overline{z}}.
		\end{equation}

	A sequence of weights $\{w_n\}_{n=1}^\infty $ is \df{multiplicative} if $w_{mn} = w_{m}w_n$ for each relatively prime pair of natural numbers $m$ and $n$. As a special case, $\{w_n\}_{n=1}^\infty $ is 
	\df{completely multiplicative} if $w_{mn}=w_{m}w_n$ for every pair of natural numbers $m$ and $n$ -- coprime or not.  It is straighforward to see that a multiplicative sequence is determined by its values on the powers of the primes and that a 
	completely multiplicative sequence is determined by its values on 1 and the primes. Note also that $w_1=w_1w_1$, so that $w_1=0$ or $w_1=1$. In the former case, the sequence is identically 0 and nothing interesting happens.
	Accordingly, we will relegate our discussion to the latter case.\\

	Number theory abounds with examples of multiplicative functions. For example, it is easy to see that the divisor function $d(n)$, which gives the numbers of positive divisors of the natural number $n$,  is multiplicative. Indeed, for each prime $p$ and 	
	each natural number $k$, we have $d(p^{k})=k+1$. If $m=p_1^{\alpha_1}\cdots p_M^{\alpha_M}$, then it is clear that
		\[d(m)=d(p_1^{\alpha_1}\cdots p_M^{\alpha_M})=\prod_{i=1}^M(\alpha_i+1).\]
	If $n=q_1^{\beta_1}\cdots q_n^{\beta_n}$ and if there are no primes dividing both $m$ and $n$, then
		\[d(mn)=d(p_1^{\alpha_1}\cdots p_i^{\alpha_i}q_1^{\beta_1}\cdots q_n^{\beta_n})=\prod_{i=1}^M(\alpha_i+1)\prod_{j=1}^N(\beta_j+1)=d(m)d(n).\]\\

	Let $P_N$ denote the  set consisting of the first $N$ primes and denote by $\langle P_N\rangle$ the collection of {\it words} generated by the primes in $P_N$. That is,
		\label{eq:P_N definition}
			\begin{equation}
				\langle P_N\rangle=\left\{2^{\alpha_1}3^{\alpha_2}\cdots p_N^{\alpha_N}:\alpha_j\in\mathbb N\right\}.
			\end{equation}
	Let $\mathcal H^\textbf{w}_N$ denote the closure of the subpace of $\mathcal H^\textbf{w}$ spanned by the vectors $n^{-s}$ for $n\in\langle P_N\rangle$. That is,
		\label{eq:H_N definition}
			\begin{equation}
				\mathcal H^\textbf{w}_N=\left\{f(s)=\sum_{n\in\langle P_N\rangle}a_n n^{-s}:f\in\mathcal H^\textbf{w}\right\}.
			\end{equation}
	Informally, $\mathcal H^\textbf{w}_N$ is the Hilbert space of functions obtained by taking the elements of $\mathcal H^\textbf{w}$ and ``throwing out'' the terms not indexed by $\langle P_N\rangle$. Finally, let  $\pi_N$ denote the  projection of 	
	$\mathcal H^\textbf{w}$ onto $\mathcal H^\textbf{w}_N$.\\

	There are several things that can be said about $\mathcal H^\textbf{w}_N$ and $\pi_N$.\\ 

		\begin{lemma}
			\label{lem:pi_N facts}
				Suppose $\varphi$ is a multiplier on $\mathcal H^\textbf{w}$ with 
					\[\varphi(s)=\sum_{n=1}^\infty a_n n^{-s}.\]
				Then
					\begin{enumerate}[(i)]
                                                		\item $\varphi\in\mathcal H^{\textbf{w}}$; 
						\item \label{it:pi_N definition} $\pi_N\varphi$ is a multiplier on $\mathcal H^\textbf{w}_N$ with
								\[\pi_N\varphi(s)=\sum_{n\in\langle P_N\rangle}a_n n^{-s};\]
						\item \label{it:abuse} With an abuse of notation, $\pi_N M_\varphi=M_{\pi_N\varphi}|_{\mathcal H^\textbf{w}_N}$; and 
						\item 
								\begin{equation} 
									\label{eq:pi_N k definition}
										\pi_N k^\textbf{w}(u,z)=\sum_{n\in\langle P_N\rangle}w_n^{-1}n^{-u-\overline{z}}
								\end{equation}
						is the  reproducing kernel for $\mathcal H^\textbf{w}_N$. 
					\end{enumerate}	
		\end{lemma}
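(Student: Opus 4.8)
The plan is to prove the four parts in order: part~(i) is what makes sense of $\pi_N\varphi$, parts~(ii) and~(iii) fall out of one computation with Dirichlet coefficients, and part~(iv) follows from the theory of reproducing kernels for subspaces. \emph{Part (i).} Since $n_0=1$, the constant function $1=1^{-s}$ is the monomial $n^{-s}$ with $n=1$, so (as $w_1<\infty$) it lies in $\mathcal H^\textbf{w}$; hence $M_\varphi 1=\varphi\cdot 1=\varphi$ lies in $\mathcal H^\textbf{w}$ by the definition of a multiplier. It remains only to check that the Hilbert-space element $M_\varphi 1$ carries the Dirichlet expansion $\sum_n a_n n^{-s}$ named in the hypothesis: as a function on $\Omega_{\sigma_0}$ it equals $\varphi$, which by Lemma~\ref{prop:multiplier in H^w} is represented on $\Omega_{\sigma_0}$ by $\sum_n a_n n^{-s}$, and a function has at most one Dirichlet expansion on a half-plane, so the coefficients agree and $\sum_n|a_n|^2 w_n<\infty$.

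\emph{Parts (ii) and (iii).} First I would record the combinatorial fact that $\langle P_N\rangle$ is closed under multiplication and under taking divisors, so that its complement absorbs multiplication: $m\notin\langle P_N\rangle$ forces $mn\notin\langle P_N\rangle$ for all $n$. Now fix $f=\sum_{m\in\langle P_N\rangle}c_m m^{-s}\in\mathcal H^\textbf{w}_N$ and put $g:=\sum_{n\in\langle P_N\rangle}a_n n^{-s}$, which lies in $\mathcal H^\textbf{w}$ by~(i). Both $\varphi$ and $f$ lie in $\mathcal D_{\sigma_0}$, hence converge absolutely on $\Omega_{\sigma_0+1}$, where $\varphi(s)f(s)=\sum_k d_k k^{-s}$ with $d_k=\sum_{mn=k}a_n c_m$; by uniqueness this is the Dirichlet expansion of $M_\varphi f\in\mathcal H^\textbf{w}$. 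Using that $c_m$ is supported on $\langle P_N\rangle$ together with the combinatorial fact, one checks that $gf$ has $k$-th Dirichlet coefficient $d_k$ for $k\in\langle P_N\rangle$ and $0$ otherwise: if $k\in\langle P_N\rangle$, every factorization $k=mn$ has $m,n\in\langle P_N\rangle$, so that coefficient is $\sum_{mn=k}a_n c_m=d_k$; if $k\notin\langle P_N\rangle$, no factorization has both $m,n\in\langle P_N\rangle$, so the coefficient is an empty sum. Since $\sum_{k\in\langle P_N\rangle}|d_k|^2 w_k\le\sum_k|d_k|^2 w_k<\infty$, this says $gf=\pi_N(\varphi f)\in\mathcal H^\textbf{w}_N$; in particular $g$ multiplies $\mathcal H^\textbf{w}_N$ into itself. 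That $g=\pi_N\varphi$ is immediate from~(i), because $\{n^{-s}\}_n$ is an orthogonal basis of $\mathcal H^\textbf{w}$ with $\mathcal H^\textbf{w}_N$ the closed span of those $n^{-s}$, $n\in\langle P_N\rangle$, so $\pi_N$ applied to $\varphi=\sum_n a_n n^{-s}$ deletes precisely the terms indexed outside $\langle P_N\rangle$. Finally, the relation $\pi_N(\varphi f)=(\pi_N\varphi)f=M_{\pi_N\varphi}f$ for $f\in\mathcal H^\textbf{w}_N$ obtained above is exactly statement~(iii); the same bookkeeping in fact gives $\pi_N M_\varphi=M_{\pi_N\varphi}\pi_N$ on all of $\mathcal H^\textbf{w}$ and $\|M_{\pi_N\varphi}\|\le\|M_\varphi\|$.

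\emph{Part (iv).} Being a closed subspace of the reproducing-kernel Hilbert space $\mathcal H^\textbf{w}$, $\mathcal H^\textbf{w}_N$ is itself a reproducing-kernel Hilbert space, and for $z\in\Omega_{\sigma_0}$ its kernel is $\pi_N k^\textbf{w}_z$: if $f\in\mathcal H^\textbf{w}_N$ then $f(z)=\langle f,k^\textbf{w}_z\rangle_\textbf{w}=\langle\pi_N f,k^\textbf{w}_z\rangle_\textbf{w}=\langle f,\pi_N k^\textbf{w}_z\rangle_\textbf{w}$ by self-adjointness of $\pi_N$, while $\pi_N k^\textbf{w}_z\in\mathcal H^\textbf{w}_N$, so uniqueness of the reproducing kernel identifies $\pi_N k^\textbf{w}$ as the kernel of $\mathcal H^\textbf{w}_N$. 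Applying $\pi_N$ to the expansion of $k^\textbf{w}_z$ in~\eqref{eq:kernel} along the orthogonal basis $\{n^{-s}\}_n$ merely truncates it to $n\in\langle P_N\rangle$, producing $\pi_N k^\textbf{w}(u,z)=\sum_{n\in\langle P_N\rangle}w_n^{-1}n^{-u-\overline{z}}$, convergent on $\Omega_{\sigma_0}\times\Omega_{\sigma_0}$ by~\eqref{eq:weight condition}. This is~\eqref{eq:pi_N k definition}.

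\emph{Main obstacle.} None of the steps is deep; the only point needing genuine care is the Dirichlet-product bookkeeping in part~(ii) --- selecting a common half-plane of absolute convergence for $\varphi$ and $f$ (which exists because $\mathcal H^\textbf{w}\subseteq\mathcal D_{\sigma_0}$, so every element has a finite abscissa of absolute convergence), legitimately forming the Cauchy product there, and matching its coefficients with those of the Hilbert-space element $M_\varphi f$ via uniqueness of Dirichlet expansions. Once that is in place, splitting the index set into $\langle P_N\rangle$ and its complement renders parts~(ii)--(iv) essentially formal.
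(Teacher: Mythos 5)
Your proof is correct, and since the paper states Lemma \ref{lem:pi_N facts} without proof, your argument supplies exactly the details the author treats as immediate: part (i) via $M_\varphi 1$ together with uniqueness of Dirichlet expansions, parts (ii)--(iii) via the fact that $\langle P_N\rangle$ is closed under multiplication and under taking divisors (so Dirichlet convolution commutes with truncation to $\langle P_N\rangle$), and part (iv) by the standard compression-of-the-kernel argument for a closed subspace. Nothing further is needed; your identity $\pi_N M_\varphi=M_{\pi_N\varphi}\pi_N$ and the bound $\|M_{\pi_N\varphi}\|\leq\|M_\varphi\|$ are also consistent with how the lemma is used later, e.g.\ in \eqref{eq:phi_N bounded}.
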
	

	The following is the main result of this article.  Here, $|\varphi|_{\Omega_{\sigma}}$ denotes the supremum of $\varphi$ on $\Omega_{\sigma}$.
 	
	\begin{theorem}
		\label{thm:main theorem}
			Let $\{w_n\}_{n=1}^\infty $ be a sequence of positive numbers, suppose that $\varphi$ is a multiplier of $\mathcal H^\textbf{w}$ and let $0\leq\Delta\leq\Delta'$ be real numbers.
			If 
				\[\sum_{n=1}^\infty w_n^{-1}n^{-2\sigma'}<\infty\]
			whenever $\sigma'>\Delta'$ and 
				\[\sum_{n\in\langle P_N\rangle} w_n^{-1}n^{-2\sigma}<\infty\]
			for each $N$ whenever $\sigma>\Delta$, then 
				\begin{enumerate}
					\item  each $f\in\mathcal H^\textbf{w}$ converges absolutely in $\Omega_{\Delta'}$;
					\item each $f\in\mathcal H^\textbf{w}_N$ converges absolutely in $\Omega_\Delta$;
					\item $\mathcal H^\textbf{w}$ is a reproducing kernel Hilbert space (with point evaluations being continuous in $\Omega_{\Delta'}$); 
					\item the sequence $\{\pi_N \varphi\}_{N=1}^\infty$ is uniformly bounded in sup norm on $\Omega_\Delta$  by $\|M_\varphi\|$; and  
					\item $\varphi$ converges in $\Omega_\Delta$ with
							\begin{equation} \label{eq:lower bound}|\varphi |_{\Omega_\Delta}\leq\|M_\varphi\|.\end{equation}
				\end{enumerate}

			\noindent In the other direction, if 
					\begin{enumerate}
						\item $0\leq\delta$;
						\item $\{w_n\}_{n=1}^\infty $  is multiplicative; and
						\item for each prime $p$ and positive integer $k$,  $w_{p^{k-1}}p^{-2\delta}\geq w_{p^{k}}$, 
					\end{enumerate}
			then 
				\begin{equation} \label{eq:upper bound} \|M_\varphi\|\leq|\varphi|_{\Omega_\delta}.\end{equation}
	\end{theorem}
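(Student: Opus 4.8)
The plan is to treat the two directions separately. \emph{The first five conclusions.} Items (1)--(3) follow from Cauchy--Schwarz: for $f=\sum_n a_n n^{-s}\in\mathcal H^{\textbf{w}}$ and $\mathfrak{Re}(s)=\sigma'>\Delta'$ one has $\sum_n|a_n|n^{-\sigma'}\le(\sum_n|a_n|^2w_n)^{1/2}(\sum_n w_n^{-1}n^{-2\sigma'})^{1/2}<\infty$, giving absolute convergence and a bounded point evaluation on $\Omega_{\Delta'}$ (with kernel as in \eqref{eq:kernel}), and the same estimate restricted to $\langle P_N\rangle$ gives (2). For (4) I would invoke the standard multiplier--kernel relation on the reproducing kernel Hilbert space $\mathcal H^{\textbf{w}}_N$, whose kernel is $\pi_N k^{\textbf{w}}$ by Lemma~\ref{lem:pi_N facts} and is nonvanishing: for $u\in\Omega_\Delta$ the adjoint $M_{\pi_N\varphi}^{*}$ sends the kernel function at $u$ to $\overline{\pi_N\varphi(u)}$ times itself, so $|\pi_N\varphi(u)|\le\|M_{\pi_N\varphi}\|=\|\pi_NM_\varphi|_{\mathcal H^{\textbf{w}}_N}\|\le\|M_\varphi\|$ (using Lemma~\ref{lem:pi_N facts} and $\|\pi_N\|\le1$). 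For (5): by (2) and (4) the $\pi_N\varphi$ are holomorphic and uniformly bounded by $\|M_\varphi\|$ on $\Omega_\Delta$, and they converge pointwise to $\varphi$ on $\Omega_{\Delta'}$ since $\varphi\in\mathcal H^{\textbf{w}}$ converges absolutely there and $\langle P_N\rangle\uparrow\mathbb N$; Montel's theorem yields a locally uniform limit $g$ on $\Omega_\Delta$ with $|g|_{\Omega_\Delta}\le\|M_\varphi\|$ and $g=\varphi$ on $\Omega_{\Delta'}$, and Bohr's theorem (abscissa of uniform convergence $=$ abscissa of boundedness) then forces the Dirichlet series of $\varphi$ to converge on all of $\Omega_\Delta$ with sum $g$, which is \eqref{eq:lower bound}.

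\emph{The inequality \eqref{eq:upper bound}.} We may assume $|\varphi|_{\Omega_\delta}<\infty$, so $\varphi$ extends to a bounded holomorphic function on $\Omega_\delta$ (its Dirichlet series converging there, again by Bohr). First, $\pi_NM_\varphi\pi_N\to M_\varphi$ strongly and $\|\pi_N\|\le1$, so $\|M_\varphi\|=\sup_N\|M_{\pi_N\varphi}|_{\mathcal H^{\textbf{w}}_N}\|$. Second, averaging the vertical limits $\varphi^{\chi}(s)=\sum_n a_n\chi(n)n^{-s}$ over completely multiplicative unimodular $\chi$ with $\chi_p$ Haar--distributed for $p>p_N$ and $\chi_p=1$ otherwise recovers $\pi_N\varphi=\int\varphi^{\chi}\,d\chi$, and since each $\varphi^{\chi}$ satisfies $|\varphi^{\chi}|_{\Omega_\delta}\le|\varphi|_{\Omega_\delta}$ (Kronecker's theorem), one gets $|\pi_N\varphi|_{\Omega_\delta}\le|\varphi|_{\Omega_\delta}$. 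Thus it suffices to show $\|M_{\pi_N\varphi}|_{\mathcal H^{\textbf{w}}_N}\|\le|\pi_N\varphi|_{\Omega_\delta}$ for fixed $N$. Because $\{w_n\}$ is multiplicative with $w_1=1$, the Bohr correspondence $p_j^{-\alpha s}\leftrightarrow z_j^{\alpha}$ identifies $\mathcal H^{\textbf{w}}_N$ unitarily with $H_{p_1}\otimes\cdots\otimes H_{p_N}$, where $H_p=\{\sum_k c_kz^k:\sum_k|c_k|^2w_{p^k}<\infty\}$, carrying $M_{\pi_N\varphi}$ to multiplication by the power series $\tilde\psi$ lifting $\pi_N\varphi$; and since only finitely many primes occur, the frequencies $\{\log n:n\in\langle P_N\rangle\}$ have polynomially growing counting function, so the abscissae of conditional and absolute convergence coincide, $\tilde\psi$ is holomorphic on the polydisc $D=\prod_j\{|z_j|<p_j^{-\delta}\}$, and Kronecker density together with the maximum principle give $\sup_D|\tilde\psi|=|\pi_N\varphi|_{\Omega_\delta}$.

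\emph{The multivariable estimate, and the main obstacle.} It remains to bound $M_{\pi_N\varphi}=\tilde\psi(S_1,\dots,S_N)$ on $H_{p_1}\otimes\cdots\otimes H_{p_N}$, where $S_j=M_{z_j}$ acts on the $j$-th factor. The hypothesis $w_{p^{k-1}}p^{-2\delta}\ge w_{p^k}$ says exactly that $S_j$ is a weighted forward shift with all weights $(w_{p_j^{k+1}}/w_{p_j^{k}})^{1/2}\le p_j^{-\delta}$, hence $\|S_j\|\le p_j^{-\delta}$; since $\delta\ge0$, the operator $T_j:=p_j^{\delta}S_j$ is a contraction, and the $T_j$ commute --- in fact doubly commute, being supported on distinct tensor factors. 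I would then prove a von Neumann inequality for $(T_1,\dots,T_N)$: pick a Sz.-Nagy unitary dilation $U_j$ of $T_j$ on some $\mathcal L_j\supseteq H_{p_j}$, set $\widetilde U_j=I\otimes\cdots\otimes U_j\otimes\cdots\otimes I$; the tensor structure makes $(\widetilde U_1,\dots,\widetilde U_N)$ a commuting unitary power dilation of $(T_1,\dots,T_N)$, and as commuting unitaries have joint spectrum in $\mathbb T^N$ this gives $\|q(T_1,\dots,T_N)\|\le\sup_{\overline{\mathbb D}^N}|q|$ for every polynomial $q$. Substituting $g(w)=\tilde\psi(p_1^{-\delta}w_1,\dots,p_N^{-\delta}w_N)$, bounded holomorphic on $\overline{\mathbb D}^N$ with $\sup_{\overline{\mathbb D}^N}|g|=\sup_D|\tilde\psi|$, and approximating $g$ uniformly by polynomials, we obtain $\|M_{\pi_N\varphi}|_{\mathcal H^{\textbf{w}}_N}\|=\|g(T_1,\dots,T_N)\|\le\sup_D|\tilde\psi|=|\pi_N\varphi|_{\Omega_\delta}\le|\varphi|_{\Omega_\delta}$; taking the supremum over $N$ yields \eqref{eq:upper bound}. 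The crux is this multivariable step: von Neumann's inequality fails for general commuting triples of contractions, so it is essential to exploit the tensor-product (equivalently, doubly commuting) structure of the $S_j$ to construct the commuting unitary dilation; the remaining ingredients --- the Bohr correspondence, Kronecker density, strong-operator approximation by the $\pi_N$, and Bohr's abscissa theorem --- are routine in this circle of ideas.
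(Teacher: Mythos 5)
Your treatment of the first five items is essentially the paper's own proof of Theorem \ref{thm:lower bound}: Cauchy--Schwarz for (1)--(3), the kernel--eigenfunction bound on $\mathcal H^\textbf{w}_N$ (via Lemma \ref{lem:pi_N facts}) for (4), and a normal-families limit $g$ for (5); the only divergence is that you push the convergence of the Dirichlet series of $\varphi$ into $\Omega_\Delta$ with Bohr's theorem where the paper uses Schnee's theorem (Theorem \ref{thm:schnee}) --- either works, since the subsequential limit is bounded and analytic on $\Omega_\Delta$.

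For \eqref{eq:upper bound} you take a genuinely different route. The paper (Theorem \ref{thm:upper bound}) normalizes $\delta=0$, uses the Euler-product factorizations to show that the kernel quotient $K=k^\textbf{w}/k^0$ is positive semidefinite exactly because $w_{p^k}\le p^{-2\delta}w_{p^{k-1}}$, factors $K(u,z)=Q(z)^*Q(u)$, and builds an isometry $Vk^\textbf{w}_u=k^0_u\otimes Q(u)$ intertwining $M^*_{\varphi,\textbf{w}}$ with $M^*_{\varphi,0}\otimes I$, so that the bound is inherited from the $w_n\equiv1$ case, i.e.\ Theorem \ref{HLS's theorem} used as a black box (and, as in Section \ref{sec:McCarthy}, the same dilation idea recovers McCarthy's upper bound). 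You instead exhaust by the finite-prime subspaces ($\|M_\varphi\|=\sup_N\|M_{\pi_N\varphi}|_{\mathcal H^\textbf{w}_N}\|$, which is justified since $\pi_N\to I$ strongly and $\pi_NM_\varphi\pi_N=M_{\pi_N\varphi}\pi_N$), Bohr-lift $\mathcal H^\textbf{w}_N$ to a tensor product of weighted one-variable spaces (legitimate because the weights are multiplicative with $w_1=1$), note that the hypothesis says precisely that each $p_j^{\delta}M_{z_j}$ is a weighted shift of norm at most $1$, and run a von Neumann inequality through the commuting unitary dilation available in the tensor-split (doubly commuting) situation, with Kronecker plus the maximum principle identifying $\sup_D|\tilde\psi|$ with $|\pi_N\varphi|_{\Omega_\delta}$ and character averaging giving $|\pi_N\varphi|_{\Omega_\delta}\le|\varphi|_{\Omega_\delta}$. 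This is sound and does not invoke Theorem \ref{HLS's theorem}; in effect it re-proves the needed half of it, weights included, at the price of importing Bohr's and Kronecker's theorems and the vertical-limit formalism, whereas the paper's kernel-domination argument is shorter and stays entirely inside reproducing-kernel machinery. Two steps in your sketch need repair, both routine: (i) a bounded holomorphic $g$ on $\mathbb D^N$ is generally \emph{not} a uniform limit of polynomials on the polydisc (that would put $g$ in the polydisc algebra); instead apply the polynomial von Neumann inequality to the Abel or Fej\'er means $g_r(w)=g(rw)$, whose Taylor series converge uniformly on $\overline{\mathbb D}^N$ with $\sup_{\overline{\mathbb D}^N}|g_r|\le\sup_{\mathbb D^N}|g|$, and then let $r\to1$, using the uniform norm bound and coefficientwise convergence against the monomial basis to identify the weak limit with $M_{\pi_N\varphi}$; (ii) the bound $|\varphi^{\chi}|_{\Omega_\delta}\le|\varphi|_{\Omega_\delta}$ for \emph{every} character $\chi$, and the interchange $\pi_N\varphi=\int\varphi^{\chi}\,d\chi$ off the half-plane of absolute convergence, require Bohr's theorem (uniform convergence on each $\Omega_{\delta+\epsilon}$) in addition to Kronecker, so state that explicitly.
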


	\begin{remark}
		 We will see in Section \ref{sec:lower bound} that if $\Delta\geq 0$ and if for each $\sigma>\Delta$ there is a $C_\sigma>0$ such that 	
			\begin{equation}
				w_n^{-1} n^{-2\sigma}\leq C_{\sigma}
			\end{equation}
		for each $n$, then
			\[|\varphi |_{\Omega_\Delta}\leq\|M_\varphi\|.\]\\
		This inequality will be a consequence of the first half of Theorem \ref{thm:main theorem}.\\
	\end{remark}

	The case where $w_n\equiv 1$ was considered by HLS in \cite{HLS}. In that case, the multipliers on $\mathcal H^\textbf{w}$ are the Dirichlet series which converge in the full right half plane to a bounded analytic function. Further,
		\[\|M_\varphi\| = |\varphi|_{\Omega_0},\]
	thus isometrically isomorphically identifying the space of multipliers on $\mathcal H^\textbf{w}$ as the space of bounded, analytic functions representable by Dirichlet series in $\Omega_0$.\\

	In the setting of Theorem \ref{McCarthy's theorem}, where the weights are given by a measure on $[0,\infty)$ for which $0$ is a point of density, the inequality in Theorem \ref{thm:growth rate theorem}
	is satisfied for each $\Delta>0$.  On the other hand, McCarthy's weights are completely multiplicative only in the case that $w_n\equiv 1$, which follows from Jensen's inequality: If $\{w_n\}_{n=1}^\infty $ is completely multiplicative, then, since
       		\[w_1=\mu([0,\infty))=1,\]
	we can apply Jenson's inequality to the convex function $x^{2}$ to see that
		\[\left(\int_0^\infty  n^{-2\sigma}d\mu(\sigma)\right)^{2}=(w_n)^{2}=w_{n^2}=\int_0^\infty  (n^{-2\sigma})^2 d\mu(\sigma).\]
	Equality only occurs when either the integrand is constant a.e.-[$\mu$] or the convex function being applied to the integral is linear. It follows then, in our case, that $\mu$ must be a point mass at some point in $[0,\infty)$. 
	This shows that a completely multiplicative sequence can't come from any positive measure at all except in the case of a point mass. In Section \ref{sec:examples}, it is shown that the multiplicative 
	number theoretic weights considered earlier are not determined by a measure.\\

	In the present context, the inequality $\|M_\varphi\|\geq|\varphi|_{\Omega_{\Delta+1}}$ follows immediately from standard reproducing kernel machinery. The sharper inequality in \eqref{eq:lower bound} is thus the content of this half of 	
	the theorem and the proof given here shares similarities with the argument found in the proof of McCarthy's result \cite{M}. However, the method used to obtain this sharper inequality employs in a crucial way a slightly more involved approach to the 	
	standard kernel-eigenfunction argument. The reverse inequality \eqref{eq:upper bound}  is established by reducing it to the case $w_n\equiv 1$ by first establishing a dilation result.  This dilation approach can also be used to recover the upper bound 
	on the multiplier norm for McCarthy's weights (see Theorem \ref{McCarthy's theorem}) from the $w_n\equiv 1$ case of Theorem \ref{HLS's theorem}.\\

	In Section \ref{sec:preliminaries}, we will be exploring some of the tools necessary to obtain our main result. In Section \ref{sec:lower bound}, we will obtain the lower bounds for our multipliers. In particular, we will be examing the case when our weights 	
	satisfy the inequality from \eqref{thm:growth rate theorem}. In Section \ref{sec:upper bound} we take a more operator theoretic approach to obtaining the upper bounds for our multipliers. We will then take a look at an alternative way to prove 	
	a special case of McCarthy's result in Section \ref{sec:McCarthy}. The article concludes with Section \ref{sec:examples}, which contains the precise results and the details of the number theoretic examples introduced earlier in this introduction.

\section{Preliminaries}
\label{sec:preliminaries}
 
	In much the same way that the Cauchy integral formula determines the coefficients for the  power series expansion of a function in the Hardy space $H^2(\mathbb D)$ of the unit disc, there is an integral formula that gives us
	the coefficients of a Dirichlet series:
		\begin{theorem}[\cite{A}]
			\label{thm:coefficients}
				If $f(s)=\sum_{n=1}^\infty a_nn^{-s}$ converges absolutely along the vertical strip $\sigma_{0}+it$, then for $x>0$, we have
					\[\lim_{T\rightarrow\infty}\frac{1}{2T}\int_{-T}^{T}f(\sigma_{0}+it)x^{\sigma_{0}+it}dt=
						\begin{cases}
							a_n	&\text{if $x=n$}\\
							0	&\text{otherwise}
						\end{cases}.\]
		\end{theorem}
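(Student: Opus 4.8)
The plan is to substitute the Dirichlet series for $f$ into the integral, interchange summation and integration (justified by the absolute convergence hypothesis), and thereby reduce the whole problem to evaluating a single ``orthogonality'' integral of the form $\frac{1}{2T}\int_{-T}^{T}e^{ict}\,dt$. This is the continuous analogue of the orthogonality of the characters $t\mapsto e^{ict}$, and its large-$T$ behavior is what produces the Kronecker-delta dichotomy in the statement.

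First I would write, for real $t$,
\[
f(\sigma_0+it)\,x^{\sigma_0+it}=\sum_{n=1}^\infty a_n\,n^{-\sigma_0-it}x^{\sigma_0+it}=\sum_{n=1}^\infty a_n\left(\frac{x}{n}\right)^{\sigma_0}\left(\frac{x}{n}\right)^{it}.
\]
Since $\bigl|a_n(x/n)^{\sigma_0}(x/n)^{it}\bigr|=|a_n|(x/n)^{\sigma_0}$ is independent of $t$, and $\sum_n|a_n|(x/n)^{\sigma_0}=x^{\sigma_0}\sum_n|a_n|n^{-\sigma_0}<\infty$ by the hypothesis that the series converges absolutely on the line $\mathfrak{Re}(s)=\sigma_0$, the Weierstrass $M$-test gives uniform convergence of the series in $t$. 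The series may therefore be integrated term by term over the finite interval $[-T,T]$, giving
\[
\frac{1}{2T}\int_{-T}^{T}f(\sigma_0+it)x^{\sigma_0+it}\,dt=\sum_{n=1}^\infty a_n\left(\frac{x}{n}\right)^{\sigma_0}\cdot\frac{1}{2T}\int_{-T}^{T}\left(\frac{x}{n}\right)^{it}\,dt.
\]

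Next I would evaluate the inner integral. Writing $c_n=\ln(x/n)$, so that $(x/n)^{it}=e^{ic_n t}$, a direct computation yields
\[
\frac{1}{2T}\int_{-T}^{T}e^{ic_n t}\,dt=\frac{\sin(c_n T)}{c_n T}\ \ (c_n\neq 0),\qquad \frac{1}{2T}\int_{-T}^{T}1\,dt=1\ \ (c_n=0).
\]
In either case the value has modulus at most $1$, because $|\sin u/u|\le 1$ for every real $u$. Hence each summand is bounded in absolute value by $|a_n|(x/n)^{\sigma_0}$ \emph{uniformly in $T$}, and this majorant is summable. The dominated convergence theorem for series then permits passing to the limit $T\to\infty$ term by term:
\[
\lim_{T\to\infty}\frac{1}{2T}\int_{-T}^{T}f(\sigma_0+it)x^{\sigma_0+it}\,dt=\sum_{n=1}^\infty a_n\left(\frac{x}{n}\right)^{\sigma_0}\lim_{T\to\infty}\frac{\sin(c_n T)}{c_n T}.
\]

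Finally I would identify the limit of each factor: when $c_n\neq 0$, that is $x\neq n$, we have $\sin(c_nT)/(c_nT)\to 0$, whereas when $c_n=0$, that is $x=n$, the factor equals $1$ for every $T$. Thus the only term that can survive is the index $n$ with $x=n$. If $x$ is a natural number, that index contributes $a_n(x/n)^{\sigma_0}=a_n$ while all others vanish; if $x>0$ is not a natural number, no index satisfies $x=n$ and the limit is $0$. This is exactly the asserted dichotomy. I expect the only genuinely delicate point to be the justification of the term-by-term passage to the limit in $T$; this is precisely why the uniform bound $|\sin u/u|\le 1$ is recorded above, since it supplies a $T$-independent summable majorant and reduces the interchange to a routine application of dominated convergence.
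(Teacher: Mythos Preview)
Your argument is correct and is precisely the ``simple Fourier coefficient argument'' the paper alludes to (the paper gives no detailed proof, only this remark and a reference to Apostol). Your handling of the two interchanges---uniform convergence for the sum/integral swap and a $T$-independent summable majorant for the limit/sum swap---is exactly what is needed.
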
 
	The proof of this theorem consists of a simple Fourier coefficient argument.\\
	
	From Theorem \ref{thm:coefficients}, it then follows that if two Dirichlet series converge absolutely along some vertical strip on which they agree, then they must in fact be the same Dirichlet series (i.e. have the same coefficients). 

	\begin{theorem}[\cite{A}]
		\label{thm:f converges uniformly}
			If $f(s)=\sum_{n=1}^\infty a_n n^{-s}$ converges at the point $s_0$, then $f$ converges uniformly on each compact set in $\Omega_{s_0}$.
	\end{theorem}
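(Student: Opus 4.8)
The plan is to reduce to the normalized case $s_0=0$ and then run the classical Abel (summation-by-parts) argument. Writing $s_0$ in place of $\mathfrak{Re}(s_0)$ as the surrounding text does, I would replace each coefficient $a_n$ by $a_n n^{-s_0}$ and the variable $s$ by $s-s_0$; since $a_n n^{-s}=(a_n n^{-s_0})\,n^{-(s-s_0)}$ and the shift $s\mapsto s-s_0$ carries $\Omega_{s_0}$ onto $\Omega_0=\{\mathfrak{Re}(s)>0\}$, it suffices to prove the statement when $\sum_{n=1}^\infty a_n$ converges and $K$ is a compact subset of $\Omega_0$. Convergence of $\sum a_n$ says its partial sums are Cauchy: given $\varepsilon>0$ there is an $M_0$ so that the block sums $T_n:=\sum_{k=m+1}^n a_k$ satisfy $|T_n|<\varepsilon$ for all $n\geq m\geq M_0$ (with $T_m=0$).

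The heart of the matter is to bound the tail $\sum_{n=m+1}^N a_n n^{-s}$ uniformly in $s\in K$. First I would record the two constants attached to the compact set: since $K\subset\Omega_0$ is compact, there exist $\sigma_*>0$ and $C<\infty$ with $\mathfrak{Re}(s)\geq\sigma_*$ and $|s|/\mathfrak{Re}(s)\leq C$ for every $s\in K$. Next, applying summation by parts to $a_n=T_n-T_{n-1}$ yields
\[
\sum_{n=m+1}^N a_n n^{-s}=T_N N^{-s}+\sum_{n=m+1}^{N-1} T_n\big(n^{-s}-(n+1)^{-s}\big),
\]
so that $|N^{-s}|=N^{-\sigma}\leq 1$ together with $|T_n|<\varepsilon$ reduces everything to controlling $\sum_n\big|n^{-s}-(n+1)^{-s}\big|$.

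For that last sum I would use the integral representation $n^{-s}-(n+1)^{-s}=s\int_n^{n+1} x^{-s-1}\,dx$, whence $\big|n^{-s}-(n+1)^{-s}\big|\leq |s|\int_n^{n+1} x^{-\sigma-1}\,dx$ with $\sigma=\mathfrak{Re}(s)$. Summing over $n\geq m+1$ telescopes the integral to $|s|\,\sigma^{-1}(m+1)^{-\sigma}\leq |s|/\sigma\leq C$, using $\sigma>0$. Combining the pieces gives $\big|\sum_{n=m+1}^N a_n n^{-s}\big|\leq\varepsilon(1+C)$ for all $N>m\geq M_0$ and all $s\in K$, which is exactly the uniform Cauchy criterion on $K$; uniform convergence follows.

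The only place where compactness is genuinely essential — and thus the main point to get right — is the bound $|s|/\mathfrak{Re}(s)\leq C$: mere pointwise membership in $\Omega_0$ gives $\sigma>0$ but permits $|s|/\sigma$ to blow up both as $s$ approaches the imaginary axis and as $|s|\to\infty$, so the uniformity truly uses that $K$ stays a positive distance from $\{\mathfrak{Re}=0\}$ and is bounded in modulus. Everything else is standard Abel-summation bookkeeping.
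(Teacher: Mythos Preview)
Your argument is correct; this is precisely the classical Abel summation proof of the result. The paper itself does not supply a proof of this theorem but simply cites it from Apostol \cite{A}, so there is nothing in the paper to compare against beyond noting that your proof is the standard one found in that reference.
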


	\begin{corollary}		\label{cor:f is analytic}
			If $f(s)=\sum_{n=1}^\infty a_n n^{-s}$ converges at the point $s_0$, then $f$ is analytic in $\Omega_{s_0}$.
	\end{corollary}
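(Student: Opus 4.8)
The plan is to obtain analyticity as an immediate consequence of Theorem \ref{thm:f converges uniformly} combined with the classical Weierstrass theorem on locally uniform limits of analytic functions. First I would note that each individual term $n^{-s}=e^{-s\log n}$ is an entire function of $s$, so every partial sum $S_N(s)=\sum_{n=1}^N a_n n^{-s}$ is a finite linear combination of entire functions and hence is itself entire; in particular, each $S_N$ is analytic on the half-plane $\Omega_{s_0}$.

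Next, since $f$ converges at the point $s_0$, Theorem \ref{thm:f converges uniformly} guarantees that $f$ converges uniformly on every compact subset of $\Omega_{s_0}$. Rephrased in terms of the partial sums, the sequence $\{S_N\}_{N=1}^\infty$ converges to $f$ uniformly on each compact $K\subset\Omega_{s_0}$. Thus $f$ is a locally uniform limit of analytic functions on the open set $\Omega_{s_0}$.

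Finally, I would invoke the Weierstrass convergence theorem to conclude that $f$ is analytic on $\Omega_{s_0}$. Concretely, one can argue via Morera's theorem: given any closed triangle $\Delta$ whose closure lies in $\Omega_{s_0}$, the set $\Delta$ is compact, so uniform convergence of $S_N$ to $f$ on $\Delta$ permits interchanging the limit with the contour integral, yielding $\oint_{\partial\Delta} f\,ds=\lim_{N\to\infty}\oint_{\partial\Delta} S_N\,ds=0$, where each term vanishes because $S_N$ is entire. Since $f$ is continuous (being a locally uniform limit of continuous functions) and integrates to zero over every such triangle, Morera's theorem gives analyticity of $f$ on $\Omega_{s_0}$.

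Given the strength of Theorem \ref{thm:f converges uniformly}, there is essentially no obstacle here; the single point requiring care is that the convergence supplied is uniform on compact sets rather than merely pointwise, as it is precisely this uniformity that licenses the interchange of limit and contour integral in the Morera argument. This is exactly the content that Theorem \ref{thm:f converges uniformly} provides, so the corollary follows at once.
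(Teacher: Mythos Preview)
Your argument is correct and is exactly the approach the paper takes: the paper's proof simply cites Theorem \ref{thm:f converges uniformly} together with Morera's theorem, and you have spelled out the details of that Morera application. Nothing to add.
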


	\begin{proof}
		This follows from Theorem \ref{thm:f converges uniformly} and from an application of Morera's theorem.
	\end{proof}

	\begin{lemma}
		\label{lem:f's are analytic}
			Let $0\leq\Delta\leq\Delta'$. If $f\in\mathcal H^\textbf{w}$ and
				\[\sum_{n=1}^\infty  w_n^{-1}n^{-2\sigma}<\infty,\]
			whenever $\sigma>\Delta'$, then $f$ is analytic in $\Omega_{\Delta'}$. Similarly, if $f\in\mathcal H^\textbf{w}_N$ and
				\[\sum_{n\in\langle P_N\rangle}w_n^{-1}n^{-2\sigma}<\infty,\]
			whenever $\sigma>\Delta$, then $f$ is analytic in $\Omega_\Delta$.
	\end{lemma}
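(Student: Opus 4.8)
The plan is to reduce everything to Corollary \ref{cor:f is analytic} by showing that the hypothesis forces absolute convergence of the Dirichlet series at every point of the relevant half-plane. Write $f(s)=\sum_{n=1}^\infty a_n n^{-s}$ with $\sum_{n=1}^\infty |a_n|^2 w_n<\infty$, fix any $\sigma>\Delta'$, and let $s$ be a point with $\mathfrak{Re}(s)=\sigma$. First I would split $|a_n|\,n^{-\sigma}=\bigl(|a_n|\,w_n^{1/2}\bigr)\bigl(w_n^{-1/2}n^{-\sigma}\bigr)$ and apply the Cauchy–Schwarz inequality to get
\[
\sum_{n=1}^\infty |a_n|\,|n^{-s}|=\sum_{n=1}^\infty |a_n|\,n^{-\sigma}\leq\Bigl(\sum_{n=1}^\infty |a_n|^2 w_n\Bigr)^{1/2}\Bigl(\sum_{n=1}^\infty w_n^{-1}n^{-2\sigma}\Bigr)^{1/2},
\]
which is finite by the hypothesis on the weights together with $f\in\mathcal H^\textbf{w}$. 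Hence $f$ converges absolutely, and in particular converges, at the point $s$.

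Next I would pass from this pointwise fact to analyticity. For each $\sigma>\Delta'$, pick a point $s_\sigma$ with $\mathfrak{Re}(s_\sigma)=\sigma$; by the estimate above $f$ converges at $s_\sigma$, so Corollary \ref{cor:f is analytic} gives that $f$ is analytic on $\Omega_{\sigma}$. Since $\Omega_{\Delta'}=\bigcup_{\sigma>\Delta'}\Omega_{\sigma}$ and analyticity is a local property, $f$ is analytic on all of $\Omega_{\Delta'}$.

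For the second assertion the argument is verbatim the same, except that the sums are taken over $n\in\langle P_N\rangle$: if $f\in\mathcal H^\textbf{w}_N$ then $f(s)=\sum_{n\in\langle P_N\rangle}a_n n^{-s}$ with $\sum_{n\in\langle P_N\rangle}|a_n|^2 w_n<\infty$, and the Cauchy–Schwarz split against $\sum_{n\in\langle P_N\rangle}w_n^{-1}n^{-2\sigma}<\infty$ yields absolute convergence on $\Omega_\Delta$, after which Corollary \ref{cor:f is analytic} and the exhaustion $\Omega_\Delta=\bigcup_{\sigma>\Delta}\Omega_\sigma$ finish the proof. There is no real obstacle here; the only point requiring a little care is that Corollary \ref{cor:f is analytic} delivers analyticity on a half-plane determined by a single convergence point, so one must genuinely vary $\sigma$ down toward $\Delta'$ (respectively $\Delta$) rather than invoking the corollary once, and one should note that the condition $0\le\Delta\le\Delta'$ is only used to place the half-planes to the right of the imaginary axis and plays no further role.
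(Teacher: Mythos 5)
Your proof is correct and follows essentially the same route as the paper: a Cauchy--Schwarz estimate splitting $|a_n|n^{-\sigma}=(|a_n|w_n^{1/2})(w_n^{-1/2}n^{-\sigma})$ to get (absolute) convergence at every point of $\Omega_{\Delta'}$ (resp.\ $\Omega_\Delta$), followed by Corollary \ref{cor:f is analytic}; the paper simply states this as immediate and defers the Cauchy--Schwarz details to the proof of Theorem \ref{thm:lower bound}. Your added care in varying $\sigma$ and exhausting $\Omega_{\Delta'}$ by the half-planes $\Omega_\sigma$ is a correct filling-in of the detail the paper leaves implicit.
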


	\begin{proof}
		This is an immediate consequence of Corollary \ref{cor:f is analytic} and the fact that each $f\in\mathcal H^\textbf{w}$ (resp. $f\in\mathcal H^\textbf{w}_N$) converges in $\Omega_{\Delta'}$ (resp. $\Omega_\Delta$) by the Cauchy-Schwarz 
		inequality.
	\end{proof}

	The details of the above application of the Cauchy-Schwarz inequality are virtually identical to a later application, so we leave them until a more appropriate time.\\

	\begin{lemma}
		\label{lem:M is bounded}
 			If $\varphi$ is a multiplier on $\mathcal H^\textbf{w}$, then $M_\varphi $ is continuous (bounded).
	\end{lemma}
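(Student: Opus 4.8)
The plan is to invoke the closed graph theorem, so all the work lies in showing that the graph of $M_\varphi$ is closed. Note first that $M_\varphi$ is a genuine linear map $\mathcal H^\textbf{w}\to\mathcal H^\textbf{w}$: linearity is immediate from pointwise multiplication, and it is well defined precisely because $\varphi$ being a multiplier means $\varphi f\in\mathcal H^\textbf{w}$ for every $f\in\mathcal H^\textbf{w}$. Since $\mathcal H^\textbf{w}$ is a Hilbert space (hence Banach), it therefore suffices to take sequences $f_k\to f$ and $M_\varphi f_k\to g$ in the norm of $\mathcal H^\textbf{w}$ and prove that $g=M_\varphi f$.

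The key input is that norm convergence in $\mathcal H^\textbf{w}$ forces pointwise convergence throughout $\Omega_{\sigma_0}$. Indeed, for $u\in\Omega_{\sigma_0}$ the point evaluation $h\mapsto h(u)$ is realized as the inner product against the kernel function $k_u$ of \eqref{eq:kernel}, whence $|h(u)|\leq\|k_u\|_\textbf{w}\,\|h\|_\textbf{w}$. Applying this to $f_k-f$ and to $M_\varphi f_k-g$ gives $f_k(u)\to f(u)$ and $(M_\varphi f_k)(u)\to g(u)$ for every $u\in\Omega_{\sigma_0}$. On the other hand $(M_\varphi f_k)(u)=\varphi(u)f_k(u)\to\varphi(u)f(u)=(M_\varphi f)(u)$, since $\varphi(u)$ is a fixed scalar. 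Comparing the two limits yields $g(u)=(M_\varphi f)(u)$ for all $u\in\Omega_{\sigma_0}$.

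It remains to promote this pointwise identity to an identity of elements of $\mathcal H^\textbf{w}$, i.e.\ to conclude that $g$ and $M_\varphi f$ are the \emph{same} Dirichlet series. For any $\sigma>\sigma_0$, the Cauchy--Schwarz inequality together with \eqref{eq:weight condition} shows that every element of $\mathcal H^\textbf{w}$ converges absolutely along the vertical line $\mathfrak{Re}(s)=\sigma$; in particular $g$ and $M_\varphi f$ both converge absolutely along such a line and agree there, so by the uniqueness consequence of Theorem \ref{thm:coefficients} they have identical Dirichlet coefficients. Hence $g=M_\varphi f$ in $\mathcal H^\textbf{w}$, the graph of $M_\varphi$ is closed, and the closed graph theorem gives boundedness. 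The argument presents no serious obstacle; the only point demanding a moment's care is this final step, where one must not conflate pointwise equality of two functions on $\Omega_{\sigma_0}$ with equality of the corresponding vectors of $\mathcal H^\textbf{w}$ — which is exactly what the coefficient-uniqueness statement following Theorem \ref{thm:coefficients} supplies.
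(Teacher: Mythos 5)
Your proof is correct and follows essentially the same route as the paper: verify closedness of the graph by using Cauchy--Schwarz (equivalently, boundedness of point evaluations) to pass from norm convergence to pointwise convergence on $\Omega_{\sigma_0}$, and then upgrade the pointwise identity $g=\varphi f$ to equality in $\mathcal H^\textbf{w}$ via the coefficient-uniqueness consequence of Theorem \ref{thm:coefficients}. Your explicit attention to that last step (distinguishing pointwise equality from equality of vectors) is exactly the point the paper's proof also relies on.
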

	
	\begin{proof}	
		It suffices to verify the hypotheses of the closed graph theorem. Accordingly, suppose that $f_n$, $g$ and $h$ are in $\mathcal H^\textbf{w}$, that $f_n\rightarrow g$ and that $M_\varphi f_n\rightarrow h$, both in $\mathcal H^\textbf{w}$. 	
		Note that every element of $\mathcal H^\textbf{w}$ converges absolutely along the vertical strip $\sigma+it$ whenever $\sigma>\sigma_0$. We wish to show that 
		$M_\varphi g=h$. Choose $z$ with $\mathfrak{Re}(z)>\sigma_{0}$. Since norm convergence implies point-wise convergence in $\Omega_{\sigma_{0}}$ (by Cauchy-Schwarz), we have
			\[f_n(z)\rightarrow g(z)\]
		and
			\[(M_\varphi f_n)(z)\rightarrow h(z).\]
		Now,
			\begin{align*}
				|h(z)-(M_\varphi g)(z)|&\leq|h(z)-(M_\varphi f_n)(z)|+|(M_\varphi f_n)(z)-(M_\varphi g)(z)|\\
        	                      	&=|h(z)-(M_\varphi f_n)(z)|+|\varphi(z)f_n(z)-\varphi(z)g(z)|\\
				&=|h(z)-(M_\varphi f_n)(z)|+|\varphi(z)(f_n(z)-g(z))|.\\
			\end{align*}
		Letting $n\rightarrow\infty$ shows that $(M_\varphi g)(z)=h(z)$. By Theorem \ref{thm:coefficients} we see that $M_\varphi g=h$, which is what was to be shown. 
	\end{proof}

	\begin{lemma}
		\label{prop:multiplier in H^w}
			If $\varphi$ is a multiplier on $\mathcal H^\textbf{w}$, then $\varphi\in\mathcal H^\textbf{w}$.
	\end{lemma}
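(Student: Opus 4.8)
The plan is to evaluate $M_\varphi$ on the simplest element of $\mathcal H^\textbf{w}$, the constant function. Since we are in the standing setting $n_0 = 1$, the Dirichlet series $1^{-s}\equiv 1$ belongs to $\mathcal H^\textbf{w}$: it has the single nonzero coefficient $a_1 = 1$, so $\sum_{n=1}^\infty |a_n|^2 w_n = w_1<\infty$. By the defining property of a multiplier, $M_\varphi(1^{-s}) = \varphi\cdot 1^{-s}\in\mathcal H^\textbf{w}$.

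It then remains only to identify the function $M_\varphi(1^{-s})$ with $\varphi$ itself. By definition, $M_\varphi(1^{-s})$ is the pointwise product $\varphi\cdot 1^{-s}$, which on $\Omega_{\sigma_0}$ is just $\varphi$. Hence the restriction of $\varphi$ to $\Omega_{\sigma_0}$ coincides with a member of $\mathcal H^\textbf{w}$, and since $\mathcal H^\textbf{w}\subset\mathcal D_{\sigma_0}$ we may identify that member with its Dirichlet series; thus $\varphi$ is represented by a Dirichlet series on $\Omega_{\sigma_0}$ and $\varphi\in\mathcal H^\textbf{w}$. (By Theorem \ref{thm:coefficients}, applied along a vertical strip in $\Omega_{\sigma_0}$, this representation is moreover unique, so there is no ambiguity in writing $\varphi(s)=\sum_{n=1}^\infty a_n n^{-s}$.)

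I do not anticipate a genuine obstacle; the only delicate point is the bookkeeping around $n_0$, since the argument uses $n_0 = 1$ to guarantee that the constant function lies in $\mathcal H^\textbf{w}$ — which is precisely the blanket hypothesis in force. Should one wish to drop that convention, one would instead apply $M_\varphi$ to $n_0^{-s}\in\mathcal H^\textbf{w}$, observe that $\varphi\cdot n_0^{-s}$ is then a Dirichlet series, and deduce from Theorem \ref{thm:coefficients} that dividing out $n_0^{-s}$ leaves $\varphi$ itself representable by a Dirichlet series. Either way, this lemma is exactly what underpins the assertion made in the introduction that every multiplier of $\mathcal H^\textbf{w}$ is representable by a Dirichlet series in $\Omega_{\sigma_0}$.
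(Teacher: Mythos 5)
Your proof is correct and is exactly the argument the paper has in mind: with $n_0=1$ the constant $1^{-s}$ lies in $\mathcal H^\textbf{w}$, so $\varphi=M_\varphi(1^{-s})\in\mathcal H^\textbf{w}$, which is why the paper calls the lemma ``immediate in the present setting.'' Only your parenthetical sketch for $n_0>1$ is more delicate than you suggest (dividing a Dirichlet series by $n_0^{-s}$ need not yield a Dirichlet series in the integer frequencies, which is why the paper says that case is more involved), but that aside is not part of the claim being proved.
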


	This lemma is immediate in the present setting ($n_0=1$). It is also true in the more general case that $n_{0}>1$ -- as might occur when dealing with weights like McCarthy's -- but the proof is slightly more involved. For simplicity, this case will be 	
	avoided.\\

	We close this section with the following remarkable theorem of Schnee, which which will play an important role in the proof of the Theorem \ref{thm:main theorem}.
	
	\begin{theorem}[\cite{S}]
		\label{thm:schnee}
			If $f(s)=\sum_{n=1}^\infty a_nn^{-s}$
				\begin{enumerate}
					\item converges in some (possibly remote) half-plane $\Omega_{\sigma_0}$;
					\item has an analytic continuation to $\Omega_0$; and
					\item for each $\epsilon>0$, satisfies the growth condition
						\[|f(s)|=O(|s|^\epsilon)\]
					as $|s|\rightarrow\infty$ in every right half-plane contained in $\Omega_0$,
				\end{enumerate}
		 	then $\sum_{n=1}^\infty a_n n^{-s}$ in fact converges on all of $\Omega_0$.
	\end{theorem}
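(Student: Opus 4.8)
The plan is to deduce the asserted convergence from a growth estimate on the summatory function $S(x)=\sum_{n\le x}a_n$. The first step is a reduction: if one can show that $S(x)=O(x^{\eta})$ for every $\eta>0$, then the series converges at every $s_0\in\Omega_0$. Indeed, writing $\sigma_0=\mathfrak{Re}(s_0)>0$ and choosing $\eta<\sigma_0$, Abel (partial) summation gives
\[\sum_{n\le x}a_n n^{-s_0}=x^{-s_0}S(x)+s_0\int_1^x S(u)\,u^{-s_0-1}\,du,\]
and the bound $S(u)=O(u^{\eta})$ with $\eta<\sigma_0$ forces the boundary term to vanish and the integral to converge absolutely as $x\to\infty$; hence $\sum a_n n^{-s_0}$ converges. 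As $s_0\in\Omega_0$ is arbitrary, this yields the theorem. I note that hypothesis (1) guarantees absolute convergence in some half-plane (convergence at a point forces absolute convergence one unit farther to the right), so there is a real $c$ exceeding the abscissa of absolute convergence; on the line $\mathfrak{Re}(s)=c$ the series converges absolutely and agrees with the continuation $f$ supplied by hypothesis (2).

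Next I would represent $S(x)$ through Perron's formula. For $x$ a half-integer (to keep it away from the integers) and the line $\mathfrak{Re}(s)=c$, the truncated Perron formula (see \cite{A}) gives
\[S(x)=\frac{1}{2\pi i}\int_{c-iT}^{c+iT}f(s)\,\frac{x^{s}}{s}\,ds+E(x,T),\]
where the error $E(x,T)$ is $O(x^{c}T^{-1})$ up to standard near-diagonal contributions. The crux is to move the line of integration leftward, from $\mathfrak{Re}(s)=c$ to a line $\mathfrak{Re}(s)=\delta$ with $0<\delta<c$. By hypothesis (2) the continuation $f$ is analytic throughout $\Omega_0$, and $x^{s}/s$ is analytic for $\mathfrak{Re}(s)>0$, so the integrand is analytic on the closed strip $\delta\le\mathfrak{Re}(s)\le c$ and no residue is crossed. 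The two horizontal segments joining $\delta\pm iT$ to $c\pm iT$ are controlled by hypothesis (3): on a half-plane $\Omega_{\delta'}$ with $0<\delta'<\delta$ one has $|f(s)|=O(|s|^{\epsilon})$, so at height $T$ these segments contribute $O(x^{c}T^{\epsilon-1})$, which together with $E(x,T)$ is negligible.

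Finally I would estimate the shifted vertical integral and optimize. Using $|f(\delta+it)|=O\big((1+|t|)^{\epsilon}\big)$ together with $|\delta+it|\ge|t|$ gives
\[\left|\frac{1}{2\pi i}\int_{\delta-iT}^{\delta+iT}f(s)\,\frac{x^{s}}{s}\,ds\right|=O\!\left(x^{\delta}T^{\epsilon}\right),\]
so collecting contributions yields $S(x)=O(x^{\delta}T^{\epsilon})+O(x^{c}T^{\epsilon-1})$. Balancing the two terms with the choice $T=x^{\,c-\delta}$ gives $S(x)=O\!\left(x^{\,\delta+\epsilon(c-\delta)}\right)$. Because $\delta>0$ and $\epsilon>0$ may be taken arbitrarily small (for each fixed pair the implied constant is finite), the exponent can be forced below any prescribed $\eta>0$, so $S(x)=O(x^{\eta})$ for every $\eta>0$, which completes the reduction of the first paragraph. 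I expect the main obstacle to be the bookkeeping in the truncated Perron step---in particular the near-diagonal error when $x$ is close to an integer, and securing the uniformity of the growth estimate (3) across the entire strip at height $T$---rather than the contour shift itself, which is routine once analyticity on the closed strip and the decay of the horizontal segments are in place.
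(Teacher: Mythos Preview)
The paper does not supply a proof of this statement: Theorem~\ref{thm:schnee} is quoted from Schnee~\cite{S} as a classical result and used as a black box, so there is no ``paper's own proof'' to compare against.

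Your argument is the standard modern route to results of this type---reduce to a summatory bound $S(x)=O(x^{\eta})$ via Abel summation, then prove that bound by truncated Perron plus a contour shift exploiting the analyticity and growth hypotheses. The outline is sound, and the balancing step leading to $S(x)=O(x^{\delta+\epsilon(c-\delta)})$ is correct. Two places deserve tightening. First, the truncated Perron error is not simply $O(x^{c}T^{-1})$: the usual form is
\[E(x,T)=O\!\left(\frac{x^{c}}{T}\sum_{n\ge 1}\frac{|a_n|}{n^{c}|\log(x/n)|}\right),\]
and the factor $|\log(x/n)|^{-1}$ must be handled by splitting off the near-diagonal terms $|n-x|\le x/2$; taking $x$ a half-integer only disposes of the $n=x$ term, not the whole near-diagonal range. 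Second, hypothesis~(3) as stated gives the growth bound on each closed half-plane $\overline{\Omega_{\delta'}}$ with its own implied constant; you invoke it both on the horizontal segments (which sweep the full strip $\delta\le\sigma\le c$) and on the shifted vertical line $\sigma=\delta$, so you should fix a single $\delta'<\delta$ and use the bound on $\overline{\Omega_{\delta'}}$ throughout. These are bookkeeping matters rather than genuine gaps, and you already flagged them yourself.
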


\section{The Lower Bound}
\label{sec:lower bound}

	In this section, we will produce a lower bound for $M_\varphi $ in the case that the weights satisfy the convergence conditions from \eqref{eq:point evaluations} and \eqref{eq:projected point evaluations}. As a consequence, we will see that if 
	$\Delta\geq 0$ and if for each $\sigma>\Delta$ there is a $C_\sigma>0$ such that 	
		\begin{equation}
			w_n^{-1} n^{-2\sigma}\leq C_{\sigma}
		\end{equation}
	for each $n$, then
		\[|\varphi |_{\Omega_\Delta}\leq\|M_\varphi\|.\]
	We now proceed to the main theorem of this section.\\

	\begin{theorem}
		\label{thm:lower bound} Let $\{w_n\}_{n=1}^\infty $ be a sequence of positive numbers, suppose that $\varphi$ is a multiplier of $\mathcal H^\textbf{w}$ and let $0\leq\Delta\leq\Delta'$ be real numbers.
			If 
				\begin{equation}
					\label{eq:point evaluations}
						\sum_{n=1}^\infty w_n^{-1}n^{-2\sigma'}<\infty
				\end{equation}
			whenever $\sigma'>\Delta'$ and 
				\begin{equation}
					\label{eq:projected point evaluations}
						\sum_{n\in\langle P_N\rangle} w_n^{-1}n^{-2\sigma}<\infty
				\end{equation}
			for each $N$ whenever $\sigma>\Delta$, then 
				\begin{enumerate}[(i)]
					\item \label{thm:l.b.1} each $f\in\mathcal H^\textbf{w}$ converges absolutely in $\Omega_{\Delta'}$;
					\item \label{thm:l.b.2} each $f\in\mathcal H^\textbf{w}_N$ converges absolutely in $\Omega_\Delta$;
					\item \label{thm:l.b.3} $\mathcal H^\textbf{w}$ is a reproducing kernel Hilbert space (with point evaluations being continuous in $\Omega_{\Delta'}$); 
					\item \label{thm:l.b.4} the sequence $\{\pi_N \varphi\}_{N=1}^\infty$ is uniformly bounded in sup norm on $\Omega_\Delta$ by $\|M_\varphi\|$; and  
					\item \label{thm:l.b.5} $\varphi$ converges in $\Omega_\Delta$ with
							\[|\varphi |_{\Omega_\Delta}\leq\|M_\varphi\|.\]
				\end{enumerate}
	\end{theorem}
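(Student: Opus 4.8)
The plan is to establish the five assertions in turn; (i)--(iii) are routine consequences of Cauchy--Schwarz, while (iv)--(v) carry the weight. For (i) and (ii): given $f=\sum a_n n^{-s}\in\mathcal H^\textbf{w}$ and $\mathfrak{Re}(s)=\sigma'>\Delta'$, I would write $|a_n|n^{-\sigma'}=\bigl(|a_n|w_n^{1/2}\bigr)\bigl(w_n^{-1/2}n^{-\sigma'}\bigr)$ and apply Cauchy--Schwarz to get $\sum|a_n|n^{-\sigma'}\le\|f\|_\textbf{w}\bigl(\sum_n w_n^{-1}n^{-2\sigma'}\bigr)^{1/2}<\infty$ by \eqref{eq:point evaluations}; the identical estimate over $\langle P_N\rangle$ using \eqref{eq:projected point evaluations} gives (ii). The same inequality bounds $|f(u)|$ by a constant times $\|f\|_\textbf{w}$ for each $u\in\Omega_{\Delta'}$, so point evaluations are bounded functionals there and, by Riesz representation, $\mathcal H^\textbf{w}$ is a reproducing kernel Hilbert space with kernel \eqref{eq:kernel} (now convergent on $\Omega_{\Delta'}\times\Omega_{\Delta'}$); this is (iii). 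Running the same argument for $\mathcal H^\textbf{w}_N$ shows its point evaluations are continuous on $\Omega_\Delta$, with reproducing kernel $\pi_N k^\textbf{w}$ as in \eqref{eq:pi_N k definition}.

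For (iv): by Lemma \ref{prop:multiplier in H^w}, $\varphi=\sum a_n n^{-s}\in\mathcal H^\textbf{w}$, and by Lemma \ref{lem:pi_N facts} the function $\pi_N\varphi$ is a multiplier of $\mathcal H^\textbf{w}_N$ with $M_{\pi_N\varphi}=\pi_N M_\varphi$ on $\mathcal H^\textbf{w}_N$; since restricting a projection composed with $M_\varphi$ cannot increase norm, $\|M_{\pi_N\varphi}\|\le\|M_\varphi\|$. I would then run the kernel--eigenfunction computation inside $\mathcal H^\textbf{w}_N$: for $u\in\Omega_\Delta$ and $f\in\mathcal H^\textbf{w}_N$, comparing $\langle M_{\pi_N\varphi}f,(\pi_N k^\textbf{w})_u\rangle=(\pi_N\varphi)(u)f(u)$ with $\langle f,M_{\pi_N\varphi}^{*}(\pi_N k^\textbf{w})_u\rangle$ gives $M_{\pi_N\varphi}^{*}(\pi_N k^\textbf{w})_u=\overline{(\pi_N\varphi)(u)}\,(\pi_N k^\textbf{w})_u$; since $\|(\pi_N k^\textbf{w})_u\|^2=\pi_N k^\textbf{w}(u,u)\ge w_1^{-1}=1>0$ this kernel is nonzero, and dividing by its norm yields $|(\pi_N\varphi)(u)|\le\|M_{\pi_N\varphi}\|\le\|M_\varphi\|$ for all $u\in\Omega_\Delta$ and all $N$, which is (iv).

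For (v): on $\Omega_{\Delta'}$ the series for $\varphi$ converges absolutely by (i), and since every integer lies in $\langle P_N\rangle$ for all large $N$, the bound $|\varphi(s)-\pi_N\varphi(s)|\le\sum_{n\notin\langle P_N\rangle}|a_n|n^{-\mathfrak{Re}(s)}$ forces $\pi_N\varphi\to\varphi$ uniformly on compacta of $\Omega_{\Delta'}$. Each $\pi_N\varphi$ is analytic on $\Omega_\Delta$ (Lemma \ref{lem:f's are analytic}) and by (iv) the family is uniformly bounded there by $\|M_\varphi\|$, so by Montel a subsequence converges locally uniformly on $\Omega_\Delta$ to an analytic $g$ with $|g|_{\Omega_\Delta}\le\|M_\varphi\|$; comparing limits on $\Omega_{\Delta'}$ shows $g=\varphi$ there, so $g$ is a bounded analytic continuation of $\varphi$ to $\Omega_\Delta$. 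Finally I would apply Schnee's theorem (Theorem \ref{thm:schnee}) to the translate $s\mapsto\varphi(s+\Delta)$, which converges in $\Omega_{\Delta'-\Delta}$, continues analytically to $\Omega_0$ via $g(\cdot+\Delta)$, and is bounded there (hence $O(|s|^\epsilon)$ for every $\epsilon>0$); Schnee then gives convergence of $\sum a_n n^{-s}$ on all of $\Omega_\Delta$, and its sum, agreeing with $g$ on $\Omega_{\Delta'}$, equals $g$ throughout $\Omega_\Delta$ by the identity theorem, so $|\varphi|_{\Omega_\Delta}\le\|M_\varphi\|$.

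I expect the main obstacle to be (v). The naive kernel argument in $\mathcal H^\textbf{w}$ only controls $\varphi$ on $\Omega_{\Delta'}$, where that space's kernel converges; pushing the estimate to the genuinely larger half-plane $\Omega_\Delta$ is exactly why one must pass to the subspaces $\mathcal H^\textbf{w}_N$ (whose kernels do converge on $\Omega_\Delta$) and then reassemble a statement about $\varphi$ itself via a normal-families argument and Schnee's theorem. The remaining parts are bookkeeping with Cauchy--Schwarz and the results of Section \ref{sec:preliminaries}.
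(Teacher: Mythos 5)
Your proposal is correct and follows essentially the same route as the paper: Cauchy--Schwarz for (i)--(iii), the kernel--eigenfunction argument in the projected spaces $\mathcal H^\textbf{w}_N$ for (iv), and a normal-families argument combined with Schnee's theorem to push the bound from $\Omega_{\Delta'}$ down to $\Omega_\Delta$ for (v); your explicit translation $s\mapsto\varphi(s+\Delta)$ is exactly the ``slight variation'' of Schnee the paper invokes without detail. (The only nitpick is the assertion $w_1=1$ in (iv), which is not assumed here since the weights need not be multiplicative, but all you need is $w_1^{-1}>0$, so nothing breaks.)
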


	\begin{proof}
		An application of the Cauchy-Schwarz inequality shows that point evaluations are continuous in $\Omega_{\Delta'}$: If $s=\sigma+it\in\Omega_{\Delta'}$ and if $f(s)=\sum_{n=1}^\infty f_n n^{-s}$, then
			\begin{align*}
				|f(s)|&\leq\sum_{n=1}^\infty |f_n|n^{-\sigma}\\
				&=\sum_{n=1}^\infty |f_n|w_n^{1/2} w_n^{-1/2}n^{-\sigma}\\
				&\leq\sqrt{\sum_{n=1}^\infty |f_n|^{2}w_n}\sqrt{\sum_{n=1}^\infty w_n^{-1}n^{-2\sigma}}\\
				&=\|f\|_\textbf{w}\sqrt{\sum_{n=1}^\infty w_n^{-1}n^{-2\sigma}}.\\
			\end{align*}
		Thus, $\mathcal H^\textbf{w}$ is a RKHS in which each element is representable by an absolutely convergent Dirichlet series in $\Omega_{\Delta'}$.\\
	
		Turning our attention to the projected subspace 
			\[\mathcal H^\textbf{w}_N=\left\{f(s)=\sum_{n\in\langle P_N\rangle}a_n n^{-s}:f\in\mathcal H^\textbf{w}\right\},\] 
		an application of the Cauchy-Schwarz inequality to the function
			\[f(s)=\sum_{n\in\langle P_N\rangle}f_n n^{-s}\]
		shows -- as above -- that point evaluations are continuous in $\Omega_\Delta$. This proves \eqref{thm:l.b.1}, \eqref{thm:l.b.2} and \eqref{thm:l.b.3}. 
		Moreover,  since -- as  noted in Equation \eqref{eq:pi_N k definition} -- if $k(u,z)$ is the reproducing kernel for $\mathcal H^\textbf{w}$, then $\pi_N k(u,z)$ is the reproducing kernel for the projected space $\mathcal H^\textbf{w}$ and Lemma 
		\ref{lem:f's are analytic} tells us that each $\pi_N k_u$ is analytic in $\Omega_\Delta$.\\

		 From the standard kernel/eigenfunction argument, we have
			\begin{equation}
				\label{eq:phi_N bounded}
					|\pi_N\varphi|_{\Omega_{\Delta}}\leq\|\pi_N M_\varphi |_{\mathcal H^\textbf{w}_N}\|\leq\|\pi_N M_\varphi \|\leq\|M_\varphi \|,
			\end{equation}
		with $\pi_N\varphi$ converging in $\Omega_\Delta$ by item \eqref{thm:l.b.2}. By a normal families argument, the sequence $\{\pi_N\varphi\}_{n=1}^\infty$ has a subsequence $\{\pi_{N_j}\varphi\}_{j=1}^\infty $ which converges uniformly 		
		on compact sets in $\Omega_\Delta$ to some function $\psi$, analytic in $\Omega_\Delta$. Hence 
			\[|\psi |_{\Omega_\Delta}\leq\|M_\varphi\|.\]
		On the other hand, the sequence $\{\pi_{N_j}\varphi\}_{j=1}^\infty $ converges to $\varphi$ uniformly on compact subsets of $\Omega_{\Delta'}$ by the Cauchy-Schwarz inequaility since, if $\varphi(s)=\sum_{n=1}^\infty a_n n^{-s}$ and if
		$\sigma=\mathfrak{Re}(s)>\Delta'$, then
			\[|\varphi(s)-\pi_{N_j}\varphi(s)|\leq\sqrt{\sum_{n\in\mathbb N\backslash\langle P_{N_j}\rangle}|a_n|^2 w_n}\sqrt{\sum_{n\in\mathbb N\backslash\langle P_{N_j}\rangle}w_n^{-1}n^{-2\sigma}},\]
		with the right-hand side tending to 0 as $j\to\infty$. It follows that $\psi=\varphi$ on $\Omega_{\Delta'}$. Since $\psi$ is a bounded analytic continuation of $\varphi$ into $\Omega_\Delta$, Schnee's theorem (a slight variation 	
		actually) tells us that the Dirichlet series for $\varphi$ converges on $\Omega_\Delta$. Corollary \ref{cor:f is analytic} tells us that $\varphi$ is analytic in $\Omega_\Delta$, and as $\varphi=\psi$ in $\Omega_\Delta'$, we have $\varphi=\psi$ in 
		$\Omega_\Delta$. It follows that $|\varphi|_{\Omega_\Delta}\leq\|M_\varphi \|$.
	\end{proof}

	Some facts which arose in the above proof, will be collected in the following theorem.

	\begin{theorem}	
		\label{thm:normal familes} 
         			Let $\varphi$ be a multiplier on the space $\mathcal H^\textbf{w}.$ If the sequence $\{\pi_N\varphi\}_{N=1}^\infty$ is uniformly bounded by (say) $B$ in supnorm in the half-plane $\Omega_\Delta$, then  
				\begin{enumerate}[(i)]
					\item \label{thm:n.f.1} there is some subsequence $\{\pi_{N_j}\varphi\}_{j=1}^\infty$ converging pointwise to $\varphi$ in $\Omega_\Delta$;
					\item \label{thm:n.f.2} $\varphi$ converges in $\Omega_\Delta$; and
         					\item \label{thm:n.f.3} $|\varphi|_{\Omega_\Delta}\leq B$.
				\end{enumerate}
	\end{theorem}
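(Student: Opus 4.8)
The plan is to extract the relevant pieces from the proof of Theorem \ref{thm:lower bound} and assemble them without the hypotheses on the weights, using instead the a~priori bound $B$ on the $\pi_N\varphi$. First I would recall that each $\pi_N\varphi$ is a Dirichlet polynomial (a finite sum, or at worst a Dirichlet series supported on $\langle P_N\rangle$ which converges in $\Omega_\Delta$ because $\mathcal H^\textbf{w}_N$ is finitely generated over the primes in $P_N$ — in any case, by item \eqref{it:pi_N definition} of Lemma \ref{lem:pi_N facts} it is a multiplier on $\mathcal H^\textbf{w}_N$ with the stated coefficients), so each $\pi_N\varphi$ is analytic on $\Omega_\Delta$ and bounded there by $B$. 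Thus $\{\pi_N\varphi\}_{N=1}^\infty$ is a uniformly bounded family of analytic functions on $\Omega_\Delta$, and Montel's theorem furnishes a subsequence $\{\pi_{N_j}\varphi\}_{j=1}^\infty$ converging uniformly on compact subsets of $\Omega_\Delta$ to some analytic $\psi$ with $|\psi|_{\Omega_\Delta}\le B$.

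Next I would identify $\psi$ with $\varphi$. On the smaller half-plane $\Omega_{\sigma_0}$ (where $\sigma_0$ is the abscissa coming from \eqref{eq:weight condition}, so that $\mathcal H^\textbf{w}\subset\mathcal D_{\sigma_0}$ and point evaluations there are bounded), the projections $\pi_{N_j}\varphi \to \varphi$ in the norm of $\mathcal H^\textbf{w}$ as $j\to\infty$, since $\varphi\in\mathcal H^\textbf{w}$ by Lemma \ref{prop:multiplier in H^w} and $\pi_{N_j}$ is the orthogonal projection onto $\mathcal H^\textbf{w}_{N_j}$, whose ranges exhaust $\mathcal H^\textbf{w}$. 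Because norm convergence in $\mathcal H^\textbf{w}$ implies pointwise convergence on $\Omega_{\sigma_0}$ (Cauchy–Schwarz), we get $\pi_{N_j}\varphi(s)\to\varphi(s)$ for each $s\in\Omega_{\sigma_0}$. Comparing with the locally uniform limit on $\Omega_\Delta$, and noting $\Omega_{\sigma_0}$ has a nonempty (in fact open) intersection with $\Omega_\Delta$ — or, if $\sigma_0 \le \Delta$ one can intersect with $\Omega_\Delta$ directly, and if $\sigma_0 > \Delta$ one argues on $\Omega_{\sigma_0}\subset\Omega_\Delta$ — we conclude $\psi=\varphi$ on that overlap, hence $\psi$ is an analytic continuation of $\varphi$ to all of $\Omega_\Delta$, bounded by $B$. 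This already gives \eqref{thm:n.f.1}.

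Then $\psi$ meets the hypotheses of Schnee's theorem (Theorem \ref{thm:schnee}), or the slight variation of it invoked in the proof of Theorem \ref{thm:lower bound}: the Dirichlet series for $\varphi$ converges in the remote half-plane $\Omega_{\sigma_0}$, $\psi$ is a bounded analytic continuation to $\Omega_\Delta$ (and a bounded function trivially satisfies $|\psi(s)|=O(|s|^\epsilon)$ in every right half-plane inside $\Omega_\Delta$), so the Dirichlet series for $\varphi$ converges on all of $\Omega_\Delta$; this is \eqref{thm:n.f.2}. By Corollary \ref{cor:f is analytic} the sum is analytic on $\Omega_\Delta$, and since it agrees with $\psi$ on $\Omega_{\sigma_0}$ (or the overlap above) it equals $\psi$ throughout $\Omega_\Delta$ by Theorem \ref{thm:coefficients}; hence $|\varphi|_{\Omega_\Delta}=|\psi|_{\Omega_\Delta}\le B$, which is \eqref{thm:n.f.3}.

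The main obstacle I anticipate is the bookkeeping around the two abscissae $\sigma_0$ and $\Delta$: the statement of Theorem \ref{thm:normal familes} does not assume $\Delta\le\sigma_0$ or any convergence condition tying the weights to $\Delta$, so one must be careful to run the normal-families argument on $\Omega_\Delta$ but identify the limit using only the guaranteed convergence and pointwise-evaluation properties available on $\Omega_{\sigma_0}$, and then let Schnee's theorem bridge the gap. Once that is set up correctly, every remaining step is a direct citation of Montel, Cauchy–Schwarz, Schnee, and the uniqueness theorem for Dirichlet series already recorded above.
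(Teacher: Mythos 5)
Your proposal is correct and takes essentially the same route as the paper, whose proof of this theorem is simply the corresponding portion of the proof of Theorem \ref{thm:lower bound}: Montel's theorem applied to $\{\pi_N\varphi\}$ on $\Omega_\Delta$, identification of the limit with $\varphi$ on a remote half-plane via norm convergence of the projections and Cauchy--Schwarz, Schnee's theorem to push convergence of the Dirichlet series of $\varphi$ into $\Omega_\Delta$, and the identity theorem to get $|\varphi|_{\Omega_\Delta}\leq B$. Two cosmetic slips do not affect the argument: $\langle P_N\rangle$ is infinite, so $\pi_N\varphi$ is generally not a Dirichlet polynomial (its analyticity on $\Omega_\Delta$ is implicit in the sup-norm hypothesis, or follows from Corollary \ref{cor:f is analytic} once the series is assumed to converge there), and the final identification of $\varphi$ with $\psi$ on $\Omega_\Delta$ rests on the identity theorem for analytic functions rather than on Theorem \ref{thm:coefficients}.
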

 
	We now move to a more interesting condition on our weights, which will allow us to obtain the convergence conditions given in Theorem \ref{thm:main theorem} and which -- as we will see --  be more useful in producing bounds for our multipliers when our 		weights are of a more number theoretic variety.  But first, a lemma.\\

	\begin{lemma}
		\label{lem:converge in Omega_0}
			For each positive integer $N$, the series given by 
				\[\sum_{n\in\langle P_N\rangle}n^{-s}\]
			converges in $\Omega_0$.
	\end{lemma}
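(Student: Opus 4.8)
The plan is to identify $\langle P_N\rangle$ with a set of positive integers whose prime factorizations involve only the first $N$ primes, and to use unique factorization to dominate the relevant sum by a finite Euler product of convergent geometric series.

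First I would fix a point $s\in\Omega_0$ and set $\sigma=\mathfrak{Re}(s)>0$. Since $|n^{-s}|=n^{-\sigma}$ and every summand $n^{-\sigma}$ is positive, it is enough to show $\sum_{n\in\langle P_N\rangle}n^{-\sigma}<\infty$: absolute convergence of the Dirichlet series at $s$ then yields convergence at $s$, and since $s\in\Omega_0$ was arbitrary this gives convergence throughout $\Omega_0$. (Alternatively, once we know the series converges at a single point with real part arbitrarily close to $0$, Theorem \ref{thm:f converges uniformly} delivers convergence on all of $\Omega_0$; but the direct argument already suffices.)

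For the bound, I would use that every $n\in\langle P_N\rangle$ is of the form $2^{\alpha_1}3^{\alpha_2}\cdots p_N^{\alpha_N}$ with nonnegative integer exponents, distinct exponent tuples giving distinct $n$ by uniqueness of prime factorization. Since all terms are nonnegative, summing in any order or grouping is legitimate (Tonelli), so
\[
  \sum_{n\in\langle P_N\rangle} n^{-\sigma}
  \;\le\; \sum_{\alpha_1=0}^\infty\cdots\sum_{\alpha_N=0}^\infty \prod_{j=1}^N p_j^{-\alpha_j\sigma}
  \;=\; \prod_{j=1}^N \sum_{\alpha=0}^\infty p_j^{-\alpha\sigma}
  \;=\; \prod_{j=1}^N \frac{1}{1 - p_j^{-\sigma}}.
\]
Each geometric series converges because $0<p_j^{-\sigma}<1$ when $\sigma>0$, and a product of finitely many finite numbers is finite; hence $\sum_{n\in\langle P_N\rangle}n^{-\sigma}<\infty$, completing the argument.

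I do not expect a genuine obstacle here. The only points requiring a word of justification are the passage from the sum over $\langle P_N\rangle$ to the product — handled by nonnegativity of the terms together with unique factorization — and the elementary fact that absolute convergence of a Dirichlet series at a point implies convergence there.
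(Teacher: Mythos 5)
Your proof is correct, but it takes a genuinely different route from the paper's. You argue directly: fix $s$ with $\sigma=\mathfrak{Re}(s)>0$, note all terms $n^{-\sigma}$ are nonnegative, and use unique factorization to rewrite (or dominate) the sum over $\langle P_N\rangle$ by the finite product $\prod_{j=1}^N\left(1-p_j^{-\sigma}\right)^{-1}$ of convergent geometric series, giving absolute convergence at every point of $\Omega_0$. The paper instead starts from the truncated Euler product identity valid only ``far enough to the right,'' observes that $\sum_{n\in\langle P_N\rangle}n^{-s}$ has a bounded analytic continuation to $\Omega_\epsilon$ for every $\epsilon>0$, and invokes Schnee's theorem (Theorem \ref{thm:schnee}) to push convergence of the Dirichlet series down to each $\Omega_\epsilon$, hence to $\Omega_0$. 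The paper's own remark following the lemma concedes that Schnee is not actually needed and is used only for consistency with the proof of Theorem \ref{thm:lower bound}, where that machinery is essential; your argument is precisely the elementary alternative alluded to there. What you gain is a self-contained, purely computational proof (nonnegativity plus Tonelli justifies the sum--product exchange, and absolute convergence trivially gives convergence); what the paper's version buys is a rehearsal, in a simple setting, of the continuation-plus-Schnee technique that carries the main lower-bound theorem.
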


	\begin{proof}
		We have
			\[\sum_{n\in\langle P_N\rangle}n^{-s}=\prod_{p\in P_N}\frac{1}{1-p^{-s}}\]
		far enough to the right. Since $\sum_{n\in\langle P_N\rangle}n^{-s}$ has an analytic continuation to a bounded function in $\Omega_{\epsilon}$  
           	for each $\epsilon>0,$  Schnee's theorem, Theorem \ref{thm:schnee}, tells us that $\sum_{n\in\langle P_N\rangle} n^{-s}$ in fact converges in all of $\Omega_\epsilon$. Since $\epsilon$ was arbitrary, it follows that 
		$\sum_{n\in\langle P_N\rangle}n^{-s}$ converges in $\Omega_0$.
	\end{proof}

	\begin{remark}  
		 Schnee is not actually needed in this proof, but as it needed in the proof of Theorem \ref{thm:lower bound}, it seems sensible to use it here as well.\\
	\end{remark}

	\begin{theorem}
		\label{thm:growth rate theorem}
	   		Let $\Delta\geq 0$ and a sequence $\{w_n\}_{n=1}^\infty$ of positive numbers be given. If for each $\sigma>\Delta$, there exists a $C_\sigma>0$ such that
				\[w_n^{-1} n^{-2\sigma}\leq C_{\sigma}\]
			for each $n$, and if $\varphi$ is a multiplier of $\mathcal H^{\textbf{w}}$, then the inequalities in Theorem \ref{thm:lower bound} are satisfied by $\Delta+\frac{1}{2}$ and $\Delta$ respectively and
				\[|\varphi |_{\Omega_\Delta}\leq\|M_\varphi\|.\]\\
	\end{theorem}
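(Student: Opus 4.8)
The plan is to deduce everything from Theorem~\ref{thm:lower bound}. The hypothesis that for each $\sigma>\Delta$ there is a constant $C_\sigma$ with $w_n^{-1}n^{-2\sigma}\le C_\sigma$ for all $n$ is exactly what is needed to verify the two summability conditions \eqref{eq:point evaluations} and \eqref{eq:projected point evaluations} of Theorem~\ref{thm:lower bound}, with the outer threshold taken to be $\Delta'=\Delta+\tfrac{1}{2}$ and the inner threshold the given $\Delta$. Once those are checked, the conclusions of Theorem~\ref{thm:lower bound} apply, and the last of them is precisely $|\varphi|_{\Omega_\Delta}\le\|M_\varphi\|$. Observe that $\varphi$ itself plays no role in verifying the two summability conditions; it enters only when Theorem~\ref{thm:lower bound} is invoked.

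To check \eqref{eq:point evaluations} with $\Delta'=\Delta+\tfrac{1}{2}$: given $\sigma'>\Delta+\tfrac{1}{2}$, I would choose $\sigma$ with $\Delta<\sigma<\sigma'-\tfrac{1}{2}$ (possible since $\sigma'-\tfrac{1}{2}>\Delta$), rewrite the hypothesis as $w_n^{-1}\le C_\sigma n^{2\sigma}$, and estimate $w_n^{-1}n^{-2\sigma'}\le C_\sigma n^{-2(\sigma'-\sigma)}$. Since $2(\sigma'-\sigma)>1$, the series $\sum_{n=1}^\infty n^{-2(\sigma'-\sigma)}$ converges, hence so does $\sum_{n=1}^\infty w_n^{-1}n^{-2\sigma'}$.

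To check \eqref{eq:projected point evaluations} with inner threshold $\Delta$: given a positive integer $N$ and a real number $\sigma>\Delta$, I would choose $\sigma_0$ with $\Delta<\sigma_0<\sigma$ and again write $w_n^{-1}\le C_{\sigma_0}n^{2\sigma_0}$, so that $w_n^{-1}n^{-2\sigma}\le C_{\sigma_0}n^{-2(\sigma-\sigma_0)}$ with $2(\sigma-\sigma_0)>0$. Summing over $n\in\langle P_N\rangle$ gives $\sum_{n\in\langle P_N\rangle}w_n^{-1}n^{-2\sigma}\le C_{\sigma_0}\sum_{n\in\langle P_N\rangle}n^{-2(\sigma-\sigma_0)}$, and the right-hand series converges by Lemma~\ref{lem:converge in Omega_0}, since $2(\sigma-\sigma_0)>0$ places its exponent in $\Omega_0$. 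Since $\Delta\ge 0$ yields $0\le\Delta\le\Delta+\tfrac{1}{2}$, Theorem~\ref{thm:lower bound} now applies with these two thresholds and gives the assertion.

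I do not expect a genuine obstacle here; the content is exponent bookkeeping, arranging in each case a small gap in the exponent so that a $p$-series or an Euler-product tail converges. The one subtlety is that for the projected subspaces I cannot afford to give up a full $\tfrac{1}{2}$ in the exponent the way I can for the full space --- there is no ``$+\tfrac{1}{2}$'' buffer available for the inner threshold --- which is why the projected case relies on the sharper fact, Lemma~\ref{lem:converge in Omega_0}, that $\sum_{n\in\langle P_N\rangle}n^{-s}$ converges throughout $\Omega_0$, rather than on a crude comparison with $\sum_{n=1}^\infty n^{-s}$.
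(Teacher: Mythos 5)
Your proposal is correct and follows essentially the same route as the paper: the growth hypothesis is used to absorb the weight via $w_n^{-1}\le C_\sigma n^{2\sigma}$ for an intermediate exponent, reducing \eqref{eq:point evaluations} with $\Delta'=\Delta+\tfrac{1}{2}$ to a convergent $p$-series and \eqref{eq:projected point evaluations} to Lemma~\ref{lem:converge in Omega_0}, after which Theorem~\ref{thm:lower bound} gives $|\varphi|_{\Omega_\Delta}\le\|M_\varphi\|$. The only difference is cosmetic bookkeeping (your explicit intermediate exponent versus the paper's $\epsilon/2$ splitting).
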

	 
	\begin{proof}
		Let $\epsilon>0$ and let $\sigma>\Delta+\frac{1}{2}+\epsilon$. The Cauchy-Schwarz inequality gives
			\begin{align*}
				\sum_{n=1}^\infty w_n^{-1}n^{-2\sigma}&\leq\sum_{n=1}^\infty w_n^{-1}n^{-2(\Delta+\epsilon/2)}n^{-1-\epsilon}\\
				&\leq C_{\Delta+\epsilon/2}\sum_{n=1}^\infty n^{-1-\epsilon}\\
				&<\infty,
			\end{align*}
		and we see that the inequality in \eqref{eq:point evaluations} of Theorem \ref{thm:lower bound} is satisfied by$\Delta'=\Delta+\frac{1}{2}$.\\
	
		We will now show that condition \eqref{eq:projected point evaluations} of Theorem \ref{thm:lower bound} is satisfied by $\Delta$. To do this, we will make use of a truncated version of the famed Euler product
			\[\sum_{n=1}^\infty n^{-s}=\prod_{p\text{ prime }}\frac{1}{1-p^{-s}},\]
		given by
			\[\sum_{n\in\langle P_N\rangle}n^{-s}=\prod_{p\in P_N}\frac{1}{1-p^{-s}}.\]
	
		Let $\epsilon>0$ and let $\sigma>\Delta+\epsilon$. Observe that 
			\[\sum_{n\in\langle P_N\rangle}w_n^{-1}n^{-2\sigma}\leq\sum_{n\in\langle P_N\rangle}w_n^{-1}n^{-2(\Delta+\epsilon/2)}n^{-\epsilon}\leq C_{\Delta+\epsilon/2}\sum_{n\in\langle P_n\rangle}n^{-\epsilon}.\]
		The right-most sum converges by Lemma  \ref{lem:converge in Omega_0}, so that condition \eqref{eq:projected point evaluations} of Theorem \ref{thm:lower bound} is satisfied by $\Delta$. It now follows from Theorem \ref{thm:lower bound} 
		that 
			\[|\varphi |_{\Omega_\Delta}\leq\|M_\varphi\|.\]
	\end{proof}

\section{The Upper Bound}
\label{sec:upper bound}

	The second half of Theorem \ref{thm:main theorem} is established in this section.

	\begin{theorem}
		\label{thm:upper bound} 
			Let $\{w_n\}_{n=1}^\infty $ be a sequence of positive numbers, let $\varphi$ be a multiplier of $\mathcal H^\textbf{w}$ and let $\delta\geq 0$. If 
				\begin{enumerate}[(i)]
					\item \label{eq:u.b. condition 1} $\{w_n\}_{n=1}^\infty $  is multiplicative; and 
	 				\item \label{eq:u.b. condition 2} for each prime $p$ and positive integer $k$, we have $w_{p^k}\leq p^{-2\delta}w_{p^{k-1}}$, 
				\end{enumerate}
			then 
				\[\|M_\varphi\|\leq|\varphi|_{\Omega_\delta}.\]
	\end{theorem}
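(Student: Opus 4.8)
The plan is to prove $\|M_\varphi\|\le|\varphi|_{\Omega_\delta}$ by reducing it, one prime at a time, to the unweighted case $w_n\equiv 1$ of Hedenmalm--Lindqvist--Seip (Theorem \ref{HLS's theorem}): the hypothesis that $\{w_n\}$ is multiplicative lets us factor $\mathcal H^\textbf{w}_N$ as a tensor product over the primes in $P_N$, and the hypothesis $w_{p^k}\le p^{-2\delta}w_{p^{k-1}}$ becomes, after rescaling the $p$-th variable by $p^{-\delta}$, the statement that multiplication by that variable is a contraction — which is exactly what makes a dilation argument run. We may assume $|\varphi|_{\Omega_\delta}<\infty$, since otherwise there is nothing to prove.

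First I would localize to finitely many primes. Since the Dirichlet polynomials lie in $\bigcup_N\mathcal H^\textbf{w}_N$ and $M_\varphi$ is bounded (Lemma \ref{lem:M is bounded}), it suffices to estimate $\|\varphi f\|_\textbf{w}$ for $f\in\mathcal H^\textbf{w}_N$ with $N$ arbitrary. For such an $f$ and any $M\ge N$, comparing Dirichlet coefficients — those of $\varphi f$ are the Dirichlet convolution of those of $\varphi$ and $f$, by absolute convergence in a half-plane and Theorem \ref{thm:coefficients} — and using that $\langle P_M\rangle$ is closed under divisors, one gets $\pi_M(\varphi f)=(\pi_M\varphi)f$; since $\pi_M\to I$ strongly, $\varphi f=\lim_M(\pi_M\varphi)f$ in $\mathcal H^\textbf{w}$, and Lemma \ref{lem:pi_N facts}\eqref{it:abuse} identifies the right-hand side with $M_{\pi_M\varphi}|_{\mathcal H^\textbf{w}_M}$ applied to $f$. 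Hence it is enough to prove
\[\|M_{\pi_N\varphi}|_{\mathcal H^\textbf{w}_N}\|\le|\varphi|_{\Omega_\delta}\qquad\text{for every }N.\]
Next, fixing $N$ and writing $n=\prod_{p\in P_N}p^{k_p}$ with $w_n=\prod_p w_{p^{k_p}}$, the assignment placing $(p^{-s})^{k_p}$ in the $p$-th slot identifies $\mathcal H^\textbf{w}_N$ unitarily with the Hilbert-space tensor product $\bigotimes_{p\in P_N}\mathcal H_{(p)}$, where $\mathcal H_{(p)}$ is the weighted Hardy space on the disc with $\|\sum_k a_k\zeta^k\|^2=\sum_k|a_k|^2w_{p^k}$, and $M_{\pi_N\varphi}$ becomes multiplication by $\psi_N(\zeta)=\sum_{n\in\langle P_N\rangle}a_n\prod_p\zeta_p^{k_p}$. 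Then I would make the substitution $\zeta_p=p^{-\delta}\eta_p$ in each factor: this is a unitary onto the weighted Hardy space whose weight sequence is $k\mapsto p^{2k\delta}w_{p^k}$, which by hypothesis \eqref{eq:u.b. condition 2} is non-increasing and equal to $1$ at $k=0$; equivalently, multiplication by $\eta_p$ is now a contraction on the $p$-th factor. Under it $M_{\pi_N\varphi}$ becomes multiplication by $\psi_N^\delta(\eta)=\sum_n a_n\prod_p(p^{-\delta}\eta_p)^{k_p}$.

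Now comes the dilation. The commuting contractions ``multiplication by $\eta_p$'' ($p\in P_N$) act on distinct tensor factors, so dilating each to the unilateral shift on a larger space and tensoring the dilations produces commuting \emph{isometries} on a space containing $\bigotimes_p\mathcal H_{(p)}$ that power-dilate the tuple; by It\^o's theorem these commuting isometries extend to commuting unitaries, for which von Neumann's inequality on the polydisc $\mathbb D^{|P_N|}$ is immediate from the spectral theorem. Compressing back, $\|M_{\psi_N^\delta}\|\le\|\psi_N^\delta\|_{H^\infty(\mathbb D^{|P_N|})}$ — first for Dirichlet polynomials $\varphi$, then in general by passing to Fej\'er means, whose multiplier and supremum norms are both dominated by those of $\psi_N^\delta$. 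Finally, $\psi_N^\delta$ is the restriction to the coordinate subspace $\{\eta_p=0:p\notin P_N\}$ of the Bohr lift of $\varphi(\cdot+\delta)$; since $\varphi(\cdot+\delta)$ lies in $H^\infty(\Omega_0)\cap\mathcal D$ — its Dirichlet series converges in some half-plane and, being bounded, converges in all of $\Omega_0$ by Schnee's theorem, Theorem \ref{thm:schnee} — the Bohr correspondence underlying \cite{HLS} gives $\|\text{Bohr lift}\|_{H^\infty(\mathbb D^\infty)}=|\varphi(\cdot+\delta)|_{\Omega_0}=|\varphi|_{\Omega_\delta}$, and the maximum principle gives $\|\psi_N^\delta\|_{H^\infty(\mathbb D^{|P_N|})}\le|\varphi|_{\Omega_\delta}$. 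Chaining the three steps completes the proof.

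I expect the main obstacle to be the dilation step: one has to notice that the well-known failure of von Neumann's inequality in three or more variables is harmless here, because the relevant contractions live on separate tensor factors and therefore dilate to \emph{commuting} isometries, hence to commuting unitaries; and one has to arrange the per-prime substitution so that hypothesis \eqref{eq:u.b. condition 2} turns into precisely the contractivity the dilation needs. If the operator-theoretic bookkeeping became awkward, a fallback is to write a non-increasing weight sequence starting at $1$ as $\tilde\beta_k=\nu([k+1,\infty])$ for a probability measure $\nu$, which realizes $\mathcal H_{(p)}$ — and hence $\mathcal H^\textbf{w}_N$ — as a direct integral of finite-dimensional compressions of the polydisc Hardy space $H^2(\mathbb D^{|P_N|})$, whose multiplier norm is visibly its supremum norm; this replaces the dilation by a routine Fubini argument, with the Bohr step unchanged.
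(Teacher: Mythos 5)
Your proof is essentially correct, but it takes a genuinely different route from the paper's. The paper first replaces $w_n$ by $w_nn^{-2\delta}$ to reduce to $\delta=0$, and then makes a single global reproducing-kernel computation: multiplicativity gives Euler products for the kernel $k^\textbf{w}$ and the unweighted kernel $k^0$ on a half-plane, hypothesis (ii) makes the quotient $k^\textbf{w}/k^0=\prod_p\bigl(1+\sum_{j\geq 1}(w_{p^j}^{-1}-w_{p^{j-1}}^{-1})p^{-j\tau}\bigr)$ positive semidefinite, hence of the form $Q(z)^*Q(u)$, and this yields an isometry $V:\mathcal H^\textbf{w}\to\mathcal H^0\otimes\mathcal H^Q$ with $Vk^\textbf{w}_u=k^0_u\otimes Q(u)$ and $VM_{\varphi,\textbf{w}}^*=(M_{\varphi,0}^*\otimes I)V$, so that $\|M_{\varphi,\textbf{w}}\|\leq\|M_{\varphi,0}\|\leq|\varphi|_{\Omega_0}$ follows at once from Theorem \ref{HLS's theorem}. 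You instead localize to finitely many primes (your identity $\pi_M(\varphi f)=(\pi_M\varphi)f$ for $f\in\mathcal H^\textbf{w}_N$, $M\geq N$, is correct and does the job), factor $\mathcal H^\textbf{w}_N$ as a tensor product of one-variable weighted Hardy spaces, turn hypothesis (ii) via the rescaling $\zeta_p=p^{-\delta}\eta_p$ into contractivity of each coordinate multiplication, and run Sz.-Nagy/It\^o/von Neumann on the finite polydisc — correctly observing that the failure of von Neumann's inequality for general commuting tuples is irrelevant since your contractions doubly commute. Both proofs are dilations to the $w_n\equiv 1$ case, but the paper's buys brevity: one positivity computation replaces the prime-by-prime dilation, the Fej\'er limiting argument, and the polydisc. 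Your version buys an explicit finite-dimensional picture, at the cost of importing more: the identification of the $H^\infty$ norm of the Bohr lift of $\varphi(\cdot+\delta)$ with $|\varphi|_{\Omega_\delta}$ (and its monotonicity under setting variables to zero) is a genuine additional ingredient from the Bohr correspondence in \cite{HLS}, not a consequence of the quoted Theorem \ref{HLS's theorem}, whereas the paper needs only the quoted multiplier statement. Also, your Fej\'er step is stated in the wrong direction: what you need is not that the means $q_j$ have multiplier norm dominated by that of $\psi_N^\delta$, but rather $\|M_{\psi_N^\delta}\|\leq\liminf_j\|M_{q_j}\|$, which does follow routinely from the uniform von Neumann bound $\|M_{q_j}\|\leq\|q_j\|_\infty\leq\|\psi_N^\delta\|_\infty$ together with coefficientwise (hence weak) convergence of $q_jf$ to $\psi_N^\delta f$ for $f$ in the dense set of polynomials and the boundedness of $M_{\pi_N\varphi}|_{\mathcal H^\textbf{w}_N}$ from Lemma \ref{lem:pi_N facts}; with that repair the argument closes.
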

 
	By replacing $w_n$ by $w_n n^{-2\delta}$, it suffices to establish the result for $\delta=0$. In this case, \eqref{eq:u.b. condition 2} can be restated as saying that the sequence $\{w_{p^k}\}_{k=0}^\infty$ is decreasing for each prime $p$. The case 	
	where $w_n\equiv 1$ appears in \cite{HLS,H,M} and is used here to establish this more general result.

	\begin{proof} 
		Let $\mathcal H^0$ and $k^0$ denote the space and kernel corresponding to the weights $w_n \equiv 1$. Note that for $u$ and $z$ in $\Omega_1$, the kernel $k^0(u,z)$ converges absolutely and is given by 
			\label{eq:kernel product}
				\begin{equation}
					k^0(u,z)=\sum_{n=1}^\infty n^{-\tau}=\prod_{p\text{ prime }}\frac{1}{1-p^{-\tau}},
				\end{equation}
		where $\tau=u+\overline{z}$. For the general sequence of weights and for $u$ and $z$ in $\Omega_{\sigma_0}$ (with $\sigma_0$ as in \eqref{eq:weight condition}), the kernel $k^\textbf{w}(u,z)$ converges absolutely, and has its own product 
		representation given by
			\begin{equation}
				\label{eq:general product}
					\prod_{p\text{ prime }}\sum_{j=0}^\infty w_{p^j}^{-1}p^{-j\tau}.
			\end{equation}
			
		Let $m=\max\{1,\sigma_0\}$, let $\tau\in\Omega_m$ and let
			\label{equ:kernel ratio}
				\begin{equation}
					K(u,z)=\frac{k^\textbf{w}(u,z)}{k^0(u,z)},
   				\end{equation}
   		which is well-defined since $k^0(u,z)$ has no zeros $\Omega_1\times\Omega_1$. The products for these kernels converge giving,
			\begin{align*}
				K(u,z)&=\prod_{p\text{ prime }}(1-p^{-\tau})\sum_{j=0}^\infty w_{p^{j}}^{-1} p^{-j\tau}\\
				&=\prod_{p\text{ prime }}\left(1+\sum_{j=1}^\infty(w_{p^j}^{-1}-w_{p^{j-1}}^{-1})p^{-j\tau}\right).
			\end{align*}
		Because the weights are assumed to be decreasing by condition \eqref{eq:u.b. condition 2} of Theorem \ref{thm:upper bound}, we have
			\[w_{p^j}^{-1}-w_{p^{j-1}}^{-1}\geq 0,\]
		and it follows that $K(u,z)$ is positive semidefinite on $\Omega_m\times\Omega_m$.\\
	
		From the theory of reproducing kernels, the fact that $K$ is positive semidefinite implies the existence an auxiliary Hilbert space $\mathcal H^Q$ and a function $Q:\Omega_m\to\mathcal H^Q$ such that
			\[K(u,z)=\frac{k^\textbf{w}(u,z)}{k^0(u,z)}=Q(z)^*Q(u).\]
		Multiplying through and rewriting, we have
			\begin{equation}
				\label{eq:pre V}
					\langle k^\textbf{w}_u, k^\textbf{w}_z\rangle_\textbf{w}=\langle k^0_u, k^0_z\rangle_0\langle Q(u),Q(z)\rangle_Q=\langle k^0_u\otimes Q(u), k^0_z\otimes Q(z)\rangle_\otimes.
			\end{equation}
		Define an operator $V$ from the set $\mathcal S=\{k^\textbf{w}_u:u\in\Omega_m\}$ to the set $\{k^0_u\otimes Q(z)\}$ by 
			\label{eq:isometry}
				\begin{equation}
					Vk^\textbf{w}_u=k^0_u\otimes Q(u).
				\end{equation}
		We can extend this $V$ by linearity to a map -- still denoted by $V$ -- to $\text{span }\mathcal S$. This $V$, so defined, is an isometry on $\text{span }\mathcal S$. Since $\text{span }\mathcal S$ is dense in $\mathcal H^\textbf{w}$, $V$ thus 
		extends continuously to an isometry, still denoted by $V$ on all of $\mathcal H^\textbf{w}$ into $\mathcal H^0\otimes H^Q$. \\
	
		For the multiplier $\varphi$ on $\mathcal H^\textbf{w}$ and for $u\in\Omega_m$ it is well known that  
			\[M_\varphi^* k^\textbf{w}_u=\overline{\varphi(u)}k^\textbf{w}_u.\]
		If  $\varphi$ is unbounded in $\Omega_0$, then there's nothing to do since then $|\varphi|_{\Omega_0}=\infty$. Otherwise, $\varphi$ is a bounded Dirichlet series on $\Omega_0$ and it follows from Theorem \ref{HLS's theorem}
		that $\varphi$ is a multiplier of $\mathcal H^0$ and $\|M_\varphi\|_0\leq|\varphi|_{\Omega_0}$.\\
	
		Let $M_{\varphi,\textbf{w}}$ denote multiplication by $\varphi$ in $\mathcal H^\textbf{w}$ and let $M_{\varphi,0}$ denote multiplication by $\varphi$ in $\mathcal H^0$. For $k_u^0\otimes Q(z)$ in $\mathcal H^0\otimes\mathcal H^Q$, we have  
			\label{eq:tensormap}
				\begin{equation}
					(M_{\varphi,0}^*\otimes I)(k^0_u\otimes Q(z))=\overline{\varphi(u)}k^0_u\otimes Q(z),
				\end{equation}
		and it follows that
			\label{eq:intertwining}
				\begin{equation}
					VM_{\varphi,\textbf{w}}^*=(M_{\varphi,0}^*\otimes I)V.
				\end{equation}
		It is then easy to see that the following diagram commutes:
			\[\begin{CD}
				\mathcal H^\textbf{w} @>V>>\mathcal H^0\otimes\mathcal H^Q\\
				@VM_{\varphi,\textbf{w}}^* VV @VVM_{\varphi,0}^*\otimes I V\\
				\mathcal H^\textbf{w} @>V>>\mathcal H^0\otimes\mathcal H^Q.\\
			\end{CD}\]
		In particular, we see that
			\[\|M_{\varphi,\textbf{w}}\|\leq\|V^{*}\|\;\|M_{\varphi,0}\|\;\|V\|\leq\|M_{\varphi,0}\|\leq|\varphi|_{\Omega_0},\]
		which is what was to be shown. 
	\end{proof} 

\subsection{The Case When $\delta=\Delta$}
\label{ssec:isometry}

	Now suppose that $w_n$ defines a sequence of positive numbers satisfying the conditions of Theorems \ref{thm:lower bound} and \ref{thm:upper bound} with $0\leq\delta\leq\Delta\leq\Delta'$, restated here for convenience:
		\begin{enumerate}
			\item $\sum_{n=1}^\infty w_n^{-1}n^{-2\sigma'}<\infty$ whenever $\sigma'>\Delta'$;
			\item $\sum_{n\in\langle P_N\rangle} w_n^{-1}n^{-2\sigma}<\infty$ for each $N$ whenever $\sigma>\Delta$;
			\item $\{w_n\}_{n=1}^\infty $  is multiplicative; and
	 		\item for each prime $p$, we have $w_{p^k}\leq p^{-2\delta}w_{p^{k-1}}$.
		\end{enumerate}

	We've seen that $|\varphi |_{\Omega_\Delta}\leq\|M_\varphi\|$ and $|\varphi |_{\Omega_\delta}\geq\|M_\varphi\|$. In particular, if $\delta=\Delta$, then the map $M_\varphi\mapsto\varphi$ is an isometry from $\mathfrak{M}$, the space of multipliers 	
	on $\mathcal H^\textbf{w}$ with operator norm, into $H^\infty(\Omega_\delta)\cap\mathcal D$, the space of functions, bounded and holomorphic in $\Omega_\delta$ which are representable by Dirichlet series in some right half-plane -- this space being 		equipped with the sup norm. The content of this next theorem is to show that this map is in fact surjective -- that each $\varphi\in H^\infty(\Omega_\delta)\cap\mathcal D$ gives rise to a multiplier on $\mathcal H^\textbf{w}$.\\

	\begin{theorem}
		\label{thm:surjection}
			If $\varphi\in H^\infty(\Omega_\delta)\cap\mathcal D$, then $\varphi$ is a multiplier on $\mathcal H^\textbf{w}$.
	\end{theorem}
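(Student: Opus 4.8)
The plan is to recycle the dilation built in the proof of Theorem~\ref{thm:upper bound}, run in the opposite direction. First I would reduce to $\delta=0$ exactly as there: replacing $w_n$ by $w_n n^{-2\delta}$ and $\varphi(s)$ by $\varphi(s+\delta)$ is implemented by a unitary $\mathcal H^\textbf{w}\to\mathcal H^{\textbf{w}'}$ carrying $g$ to $g(\cdot+\delta)$, it preserves multiplicativity, it carries $H^\infty(\Omega_\delta)\cap\mathcal D$ onto $H^\infty(\Omega_0)\cap\mathcal D$, and it turns the hypothesis $w_{p^k}\le p^{-2\delta}w_{p^{k-1}}$ into: $\{w_{p^k}\}_{k\ge 0}$ is non-increasing for every prime $p$. (The summability parameters $\sigma_0,\Delta,\Delta'$ change accordingly; I continue to write them the same way.) Setting $m=\max\{1,\sigma_0\}$, I would then invoke the core computation from the proof of Theorem~\ref{thm:upper bound}, which uses only multiplicativity and this monotonicity, to conclude that $K(u,z)=k^\textbf{w}(u,z)/k^0(u,z)$ is positive semidefinite on $\Omega_m\times\Omega_m$; this yields an auxiliary Hilbert space $\mathcal H^Q$, a map $Q\colon\Omega_m\to\mathcal H^Q$ with $K(u,z)=\langle Q(u),Q(z)\rangle_Q$, and an isometry $V\colon\mathcal H^\textbf{w}\to\mathcal H^0\otimes\mathcal H^Q$ with $Vk^\textbf{w}_u=k^0_u\otimes Q(u)$.

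Next, since $\varphi\in H^\infty(\Omega_0)\cap\mathcal D$, Theorem~\ref{HLS's theorem} makes $\varphi$ a multiplier of $\mathcal H^0$, so $M_{\varphi,0}$ is bounded and $M_{\varphi,0}^*k^0_u=\overline{\varphi(u)}\,k^0_u$ for every $u\in\Omega_m$. Then $(M_{\varphi,0}^*\otimes I)(k^0_u\otimes Q(u))=\overline{\varphi(u)}\,k^0_u\otimes Q(u)=\overline{\varphi(u)}\,Vk^\textbf{w}_u$, so I would form the bounded operator $W=V^*(M_{\varphi,0}^*\otimes I)V$ on $\mathcal H^\textbf{w}$ and, using $V^*V=I$, get $Wk^\textbf{w}_u=V^*\bigl(\overline{\varphi(u)}\,Vk^\textbf{w}_u\bigr)=\overline{\varphi(u)}\,k^\textbf{w}_u$ for all $u\in\Omega_m$. (Conceptually $M_{\varphi,0}^*\otimes I$ leaves the range of $V$ invariant and $W$ is its compression, so $W$ ``is'' $M_{\varphi,\textbf{w}}^*$, but only the displayed eigenvector identity is needed.) Putting $T=W^*$, for $f\in\mathcal H^\textbf{w}$ and $u\in\Omega_m$ one computes $(Tf)(u)=\langle f,Wk^\textbf{w}_u\rangle_\textbf{w}=\varphi(u)f(u)$, so the Dirichlet series $Tf\in\mathcal H^\textbf{w}$ agrees, as a function, with the pointwise product $\varphi f$ throughout $\Omega_m$.

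The last step is to promote this to an identity of Dirichlet series. By the first conclusion of Theorem~\ref{thm:lower bound}, $Tf$ converges absolutely in $\Omega_{\Delta'}$, and so does $f$; and since $\varphi\in\mathcal D$ its Dirichlet series converges absolutely in some right half-plane. Hence along a vertical line far enough to the right, the Dirichlet convolution of $\varphi$ and $f$ converges absolutely and sums to $\varphi(s)f(s)=(Tf)(s)$, so $\varphi f$ and $Tf$ are Dirichlet series that converge absolutely and coincide along that line, whence by Theorem~\ref{thm:coefficients} they are the same series. Therefore $\varphi f=Tf\in\mathcal H^\textbf{w}$ for every $f\in\mathcal H^\textbf{w}$, i.e.\ $\varphi$ is a multiplier of $\mathcal H^\textbf{w}$. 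As for the main obstacle: the genuinely substantive input — positive semidefiniteness of the kernel ratio $K$, hence the existence of the dilation $V$ — is already supplied by the proof of Theorem~\ref{thm:upper bound}, so what remains delicate is the reduction to $\delta=0$ (checking all four hypotheses transform correctly) and the closing bookkeeping that the operator-theoretic product $Tf$ really is the Dirichlet series $\varphi f$ rather than merely an analytic function agreeing with it, which is precisely what the uniqueness theorem for Dirichlet series, Theorem~\ref{thm:coefficients}, settles.
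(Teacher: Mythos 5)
Your proposal is correct and follows essentially the same route as the paper: reduce to $\delta=0$, reuse the dilation $V$ from the proof of Theorem~\ref{thm:upper bound}, and show via the eigenvector identity $M_{\varphi,0}^*k^0_u=\overline{\varphi(u)}k^0_u$ that $V^*(M_{\varphi,0}\otimes I)V$ acts as multiplication by $\varphi$ on $\mathcal H^\textbf{w}$. Your extra step invoking Theorem~\ref{thm:coefficients} to identify $Tf$ with the Dirichlet series of $\varphi f$ is a slightly more careful rendering of the paper's density-of-kernels conclusion, not a different argument.
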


	\begin{proof}
		There is no loss in assuming that $\delta=0$, as before. Since $\varphi\in H^\infty(\Omega_0)\cap\mathcal D$, Theorem \ref{HLS's theorem}, tells us that $\varphi$ is a multiplier on $\mathcal H^0$. As before, denote the 
		corresponding multiplication operator on $\mathcal H^0$ by $M_{\varphi,0}$. Note that $\mathcal H^0\subset\mathcal H^\textbf{w}$ since $w_n\leq 1$ for each $n$ -- which follows from the facts that our weights are multiplicative and $w_1=1$. 
		Now, if $f\in\mathcal H^\textbf{w}$ is a Dirichlet series, then with $V$ defined as in the proof of Theorem \ref{thm:upper bound}, we have
			\begin{align*}
				\langle V^*(M_{\varphi,0}\otimes I)Vf,k_\lambda^\textbf{w}\rangle_\textbf{w}&=\langle f,V^*(M_{\varphi,0}^*\otimes I)Vk_\lambda^\textbf{w}\rangle_\textbf{w}\\
				&=\langle f,V^*(\overline{\varphi(\lambda)}k_\lambda^0\otimes Q(\lambda))\rangle_\textbf{w}\\
				&=\varphi(\lambda)\langle f,V^*(k_\lambda^0\otimes Q(\lambda))\rangle_\textbf{w}\\
				&=\varphi(\lambda)\langle f,V^*Vk_\lambda^\textbf{w}\rangle_\textbf{w}\\
				&=\varphi(\lambda)\langle Vf,Vk_\lambda^\textbf{w}\rangle_\textbf{w}\\
				&=\varphi(\lambda)\langle f,k_\lambda^\textbf{w}\rangle_\textbf{w}\\
				&=\langle\varphi f,k_\lambda^\textbf{w}\rangle_\textbf{w}.\\
			\end{align*}
		Thus,
			\label{eq:phi a multiplier}
				\begin{equation}
					\langle V^*(M_{\varphi,0}\otimes I)Vf,k_\lambda^\textbf{w}\rangle_\textbf{w}=\langle\varphi f,k_\lambda^\textbf{w}\rangle_\textbf{w}
				\end{equation}
		for each $f\in\mathcal H^\textbf{w}$. As the span of the kernel functions $k_\lambda$ is dense in $\mathcal H^\textbf{w}$, we see that
			\[\langle V^*(M_{\varphi,0}\otimes I)Vf,g\rangle_\textbf{w}=\langle\varphi f,g\rangle_\textbf{w}\]
		for each $g\in\mathcal H^\textbf{w}$ and it follows that $V^*(M_{\varphi,0}\otimes I)V$ is given by multiplication by $\varphi$. 
	\end{proof}

	\begin{corollary}
		\label{corollary}
			If $0\leq\delta=\Delta\leq\Delta'$ and
				\begin{enumerate}
					\item $\sum_{n=1}^\infty w_n^{-1}n^{-2\sigma'}<\infty$ whenever $\sigma'>\Delta'$;
					\item $\sum_{n\in\langle P_N\rangle} w_n^{-1}n^{-2\sigma}<\infty$ for each $N$ whenever $\sigma>\Delta$;
					\item $\{w_n\}_{n=1}^\infty $  is multiplicative; and
			 		\item for each prime $p$, we have $w_{p^k}\leq p^{-2\delta}w_{p^{k-1}}$.
				\end{enumerate}
			then
				\[\mathfrak{M}\equiv H^\infty(\Omega_\delta)\cap\mathcal D.\]
	\end{corollary}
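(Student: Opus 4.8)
The plan is simply to splice together the three theorems already established in this section with the lower bound from Section~\ref{sec:lower bound}. First, since hypotheses (1) and (2) of the Corollary are exactly the convergence conditions \eqref{eq:point evaluations} and \eqref{eq:projected point evaluations} of Theorem~\ref{thm:lower bound}, that theorem applies and yields, for every multiplier $\varphi$ of $\mathcal H^\textbf{w}$, that $\varphi$ converges in $\Omega_\Delta$ with $|\varphi|_{\Omega_\Delta}\leq\|M_\varphi\|$. Likewise, hypotheses (3) and (4) are the hypotheses of Theorem~\ref{thm:upper bound} with the given $\delta$, so $\|M_\varphi\|\leq|\varphi|_{\Omega_\delta}$. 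Because $\delta=\Delta$, the two inequalities combine into the single identity $\|M_\varphi\|=|\varphi|_{\Omega_\delta}$, valid for every multiplier $\varphi$.

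Next I would package this into a map $\Phi\colon\mathfrak{M}\to H^\infty(\Omega_\delta)\cap\mathcal D$ defined by $\Phi(M_\varphi)=\varphi$. This is well defined: by Lemma~\ref{prop:multiplier in H^w} a multiplier lies in $\mathcal H^\textbf{w}$, hence is representable by a Dirichlet series, and by Theorem~\ref{thm:coefficients} that Dirichlet series is unique, so $M_\varphi$ determines $\varphi$; moreover the equality $\|M_\varphi\|=|\varphi|_{\Omega_\delta}$ shows $\varphi$ is bounded on $\Omega_\delta$, while the convergence conclusion of Theorem~\ref{thm:lower bound} together with Corollary~\ref{cor:f is analytic} shows $\varphi$ is holomorphic on $\Omega_\Delta=\Omega_\delta$ and representable by a Dirichlet series there, so $\varphi\in H^\infty(\Omega_\delta)\cap\mathcal D$ and the codomain is correct. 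The map $\Phi$ is linear and multiplicative, since $M_{\varphi+\psi}=M_\varphi+M_\psi$ and $M_{\varphi\psi}=M_\varphi M_\psi$ as operators (both sides act by pointwise multiplication by the same symbol), so $\Phi$ is an algebra homomorphism; and the displayed norm identity says precisely that $\Phi$ is isometric, hence injective.

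Finally, surjectivity of $\Phi$ is exactly the statement of Theorem~\ref{thm:surjection}: given $\varphi\in H^\infty(\Omega_\delta)\cap\mathcal D$, that theorem produces a multiplier on $\mathcal H^\textbf{w}$ with symbol $\varphi$, i.e.\ an element of $\mathfrak{M}$ carried by $\Phi$ to $\varphi$. Thus $\Phi$ is a bijective isometric algebra homomorphism, and its inverse is automatically isometric as well, which gives $\mathfrak{M}\equiv H^\infty(\Omega_\delta)\cap\mathcal D$. I do not anticipate a genuine obstacle here; the only points needing care are the bookkeeping that the four hypotheses of the Corollary are literally the hypotheses of Theorems~\ref{thm:lower bound}, \ref{thm:upper bound} and \ref{thm:surjection}, and the elementary check that $M_\varphi\mapsto\varphi$ respects the algebra operations — all the real mathematical content having already been expended in proving those three theorems.
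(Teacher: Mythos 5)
Your proposal is correct and follows the paper's own argument: the isometry of $M_\varphi\mapsto\varphi$ comes from combining Theorems \ref{thm:lower bound} and \ref{thm:upper bound} with $\delta=\Delta$, and surjectivity is exactly Theorem \ref{thm:surjection}. The extra bookkeeping you supply (well-definedness and the algebra-homomorphism check) is fine but not a departure from the paper's route.
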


	\begin{proof}
		The map $M_\varphi\mapsto\varphi$ is an isometry by Theorems \ref{thm:lower bound} and \ref{thm:upper bound} and is surjective by Theorem \ref{thm:surjection}.
	\end{proof}

	\begin{remark} 
		\label{rm:3 conditions}
			Note that the first two conditions in Corollary \ref{corollary} are satisfied by $\Delta+\frac{1}{2}$ and $\Delta$ when the growth condition in Theorem \ref{thm:growth rate theorem} is met. Corollary \ref{corollary} may then be restated as 				follows:
				If $0\leq\delta=\Delta$ and
					\begin{enumerate}
						\item  if for each $\sigma>\Delta$ there is a $C_\sigma>0$ such that 	
								\begin{equation*}
									w_n^{-1} n^{-2\sigma}\leq C_{\sigma}
								\end{equation*}
						for each $n$;
						\item $\{w_n\}_{n=1}^\infty $  is multiplicative; and
				 		\item for each prime $p$, we have $w_{p^k}\leq p^{-2\delta}w_{p^{k-1}}$,
					\end{enumerate}
				then
					\[\mathfrak{M}\equiv H^\infty(\Omega_\delta)\cap\mathcal D.\]
	\end{remark}

\section{Weights Given by Measures}
\label{sec:McCarthy}
	
	Given $\mu,$ a Borel probability measure on $[0,\infty)$ with $0$ in its support, define
		\begin{equation}
			\label{eq:w_n mu}
				w_n=\int_0^\infty n^{-2\sigma}d\mu(\sigma).
		\end{equation}
	For ease of exposition, we have restricted our attention the class of probability measures -- and corresponding weight sequences -- which is less general than that considered by McCarthy in \cite{M}.

	\begin{theorem}[McCarthy \cite{M}]
		The multiplier algebra of $\mathcal H^\textbf{w}$ is isometrically isomorphic to $H^\infty (\Omega_{0})\cap\mathcal D$, where the norm on $H^\infty (\Omega_{0})\cap\mathcal D$ is the 
		supremum of the absolute value on $\Omega_{0}$.
	\end{theorem}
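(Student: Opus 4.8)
The plan is to assemble the statement from the lower bound already in hand (Theorem \ref{thm:growth rate theorem}) together with a direct‑integral dilation argument for the upper bound and surjectivity, running parallel to the tensor‑factorization dilation of Theorem \ref{thm:upper bound}, except that $\mathcal H^\textbf{w}$ is now fibered over the measure $\mu$ (by vertical translation) rather than having its reproducing kernel factored. First, the lower bound. Since $\mu$ is a probability measure with $0$ in its support, $\mu([0,\epsilon])>0$ for every $\epsilon>0$, and since $n^{-2\sigma}\ge n^{-2\epsilon}$ for $0\le\sigma\le\epsilon$ and $n\ge 1$, we get $w_n=\int_0^\infty n^{-2\sigma}\,d\mu(\sigma)\ge\mu([0,\epsilon])\,n^{-2\epsilon}$, hence $w_n^{-1}n^{-2\sigma}\le\mu([0,\epsilon])^{-1}$ whenever $\sigma\ge\epsilon$. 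Thus the growth hypothesis of Theorem \ref{thm:growth rate theorem} holds with $\Delta=0$, and that theorem gives $|\varphi|_{\Omega_0}\le\|M_\varphi\|$ for every multiplier $\varphi$ (and that the hypotheses of Theorem \ref{thm:lower bound} hold with $\Delta'=\tfrac12$, $\Delta=0$, so $\mathcal H^\textbf{w}$ is an RKHS).

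For the upper bound, introduce for $\sigma\ge 0$ the translation $\delta_\sigma g=g(\,\cdot\,+\sigma)$, i.e. $\delta_\sigma\big(\sum_n b_n n^{-s}\big)=\sum_n b_n n^{-\sigma}n^{-s}$, and write $\|\cdot\|_0$ for the norm on $\mathcal H^0$ (the $\ell^2$ norm of the coefficient sequence). Applying Tonelli to $w_n=\int_0^\infty n^{-2\sigma}\,d\mu(\sigma)$ gives, for any formal Dirichlet series $g=\sum_n b_n n^{-s}$,
\[ \sum_n |b_n|^2 w_n \;=\; \int_0^\infty \sum_n |b_n|^2 n^{-2\sigma}\,d\mu(\sigma)\;=\;\int_0^\infty \|\delta_\sigma g\|_0^2\,d\mu(\sigma)\ \in\ [0,\infty], \]
so $g\in\mathcal H^\textbf{w}$ exactly when the right side is finite, in which case it equals $\|g\|_\textbf{w}^2$; equivalently $f\mapsto(\delta_\sigma f)_\sigma$ is an isometry of $\mathcal H^\textbf{w}$ into the direct integral $\int^{\oplus}\mathcal H^0\,d\mu(\sigma)$. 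A short Dirichlet‑convolution computation also shows $\delta_\sigma(\varphi f)=(\delta_\sigma\varphi)(\delta_\sigma f)$ as formal Dirichlet series.

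Now let $\varphi\in H^\infty(\Omega_0)\cap\mathcal D$ and $f\in\mathcal H^\textbf{w}$ (the case $|\varphi|_{\Omega_0}=\infty$ being vacuous). For a.e. $\sigma$ we have $\delta_\sigma f\in\mathcal H^0$; and $\delta_\sigma\varphi$ is a Dirichlet series with $|\delta_\sigma\varphi|_{\Omega_0}=|\varphi|_{\Omega_\sigma}\le|\varphi|_{\Omega_0}$, so by the HLS case $w_n\equiv 1$ (Theorem \ref{HLS's theorem}) it is a multiplier of $\mathcal H^0$ with $\|M_{\delta_\sigma\varphi,0}\|=|\delta_\sigma\varphi|_{\Omega_0}\le|\varphi|_{\Omega_0}$. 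Hence for a.e. $\sigma$, $\delta_\sigma(\varphi f)=(\delta_\sigma\varphi)(\delta_\sigma f)\in\mathcal H^0$ with $\|\delta_\sigma(\varphi f)\|_0\le|\varphi|_{\Omega_0}\,\|\delta_\sigma f\|_0$. Since $\varphi f$ is a genuine Dirichlet series (a product of Dirichlet series convergent in a common half‑plane), the displayed identity applied to $g=\varphi f$ and then to $g=f$ gives
\[ \|\varphi f\|_\textbf{w}^2=\int_0^\infty \|\delta_\sigma(\varphi f)\|_0^2\,d\mu(\sigma)\ \le\ |\varphi|_{\Omega_0}^2\int_0^\infty \|\delta_\sigma f\|_0^2\,d\mu(\sigma)\ =\ |\varphi|_{\Omega_0}^2\,\|f\|_\textbf{w}^2\ <\ \infty. \]
Therefore every $\varphi\in H^\infty(\Omega_0)\cap\mathcal D$ is a multiplier of $\mathcal H^\textbf{w}$ with $\|M_\varphi\|\le|\varphi|_{\Omega_0}$; conversely a multiplier is bounded on $\Omega_0$ (first paragraph) and is a Dirichlet series (Lemma \ref{prop:multiplier in H^w}), so lies in $H^\infty(\Omega_0)\cap\mathcal D$. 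Combining the two inequalities, $\|M_\varphi\|=|\varphi|_{\Omega_0}$ for every multiplier, and $M_\varphi\mapsto\varphi$ is a linear, multiplicative, isometric bijection of $\mathfrak{M}$ onto $H^\infty(\Omega_0)\cap\mathcal D$.

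The interchange of sum and integral is harmless (all terms nonnegative), so the genuine care is in the last step: checking that $\varphi f$ is honestly a Dirichlet series, and organizing the almost‑everywhere‑in‑$\sigma$ statements so that the two sides of the key identity can legitimately be compared after the fibrewise appeal to HLS. That fibrewise appeal is exactly the dilation promised in the introduction: the isometry $f\mapsto(\delta_\sigma f)_\sigma$ intertwines $M_{\varphi,\textbf{w}}$ with the decomposable operator $\int^{\oplus} M_{\delta_\sigma\varphi,0}\,d\mu(\sigma)$, whose norm is $\operatorname*{ess\,sup}_\sigma\|M_{\delta_\sigma\varphi,0}\|\le|\varphi|_{\Omega_0}$.
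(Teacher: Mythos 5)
Your proposal is correct and takes essentially the paper's own route: the lower bound is obtained exactly as in the paper (the support condition gives the growth hypothesis of Theorem \ref{thm:growth rate theorem} with $\Delta=0$), and the upper bound comes from the same dilation over $\mu$ -- an isometric embedding of $\mathcal H^\textbf{w}$ into a $\mu$-integral of fibers on which $\varphi$ acts fiberwise, with Theorem \ref{HLS's theorem} giving the fiber bound $|\varphi|_{\Omega_\sigma}\leq|\varphi|_{\Omega_0}$. The only differences are cosmetic or a matter of completeness: the paper keeps the fibers as the weighted spaces $\mathcal H^\delta$ and embeds by constant functions $(Wf)(\delta)=f$, whereas you transport each fiber to $\mathcal H^0$ by the vertical translation $\delta_\sigma$ and translate the symbol accordingly (the same dilation up to the obvious fiberwise unitary), and you spell out explicitly that every $\varphi\in H^\infty(\Omega_0)\cap\mathcal D$ multiplies $\mathcal H^\textbf{w}$ into itself, a surjectivity point the paper's written argument leaves implicit.
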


	\begin{proof}
		The hypothesis that $0$ is the left hand endpoint of the support of $\mu$ implies that for every $\sigma>0$ there is a $C_\sigma>0$ such that 
			\[w_n^{-1}n^{-2\sigma}\leq C_\sigma.\]
		In particular, if $\varphi$ is a multiplier of $\mathcal H^\textbf{w}$, then by using Theorem \ref{thm:growth rate theorem} in conjunction with \eqref{thm:l.b.5} of Theorem \ref{thm:lower bound}, we see that the Dirichlet series for $\varphi$ 
		converges on $\Omega_0$ and that
			\[|\varphi|_{\Omega_0}\leq\|M_\varphi \|.\]
	
		Given $\delta\geq 0$, let $\mathcal H^\delta$ denote the Hilbert space of Dirichlet series corresponding to the weight sequence $\{n^{-2\delta}\}_{n=1}^\infty $, let $\|f\|_\delta$ and $\langle f,g\rangle_\delta$
		denote the norm and inner product in $\mathcal H^\delta$ respectively and let $\mathcal F$ denote the set of all continuous, bounded functions from $[0,\infty)$ into $\mathcal H^0$. Observe that $\mathcal H^0$ includes
	    	(contractively) into $\mathcal H^\delta$ as well as into $\mathcal H^{\mathbf{w}}$. In particular, for $F\in\mathcal F$,
			\[\int_0^\infty\|F(\delta)\|^2_\delta \, d\mu(\delta)\leq\int_0^\infty\|F(\delta)\|_0^2 \, d\mu(\delta)\leq C^2,\]
		where $C$ is a bound for $F$ in the sense that $\|F(\delta)\|\leq C$ for each $\delta\geq 0$.\\

		On $\mathcal{F}$, consider the inner product given by
			\label{eq:new product}
				\begin{equation}
					\langle F,G\rangle=\int_0^\infty\langle F(\delta),G(\delta)\rangle_\delta d\mu(\delta)
				\end{equation}
		and let $\mathcal{F}^2$ denote the resulting Hilbert space.  Define $W$ on the dense subset $\mathcal H^0$ of $\mathcal H^\textbf{w}$ by inclusion as constant functions:
			\label{eq:W definition}
				\begin{equation}
					(Wf)(\delta)=f.
				\end{equation}
		Observe that, for finite Dirichlet series $f(s)=\sum_{n=1}^m f_n n^{-s}$ and $g=\sum_{n=1}^m g_n n^{-s}$ in $\mathcal H^0$, we have
			\[\begin{split}
				\langle Wf, Wg\rangle=&\int_0^\infty\langle f,g\rangle_\delta d\mu(\delta) \\
				&=\sum_{n=1}^m f_n\overline{g_n}\int_0^\infty n^{-2\delta}d\mu(\delta) \\
				&=\sum_{n=1}^m f_n\overline{g_n}w_n.\\
				&=\langle f,g\rangle_\textbf{w}.
			\end{split}\]
		It follows that $W$ extends to an isometry (still denoted by $W$) from $\mathcal H^\textbf{w}$ into $\mathcal F^2$.

		As already noted, the Dirichlet series for $\varphi$ converges on all of $\Omega_0$. It follows that if $f$ is a finite Dirichlet series (finitely many nonzero terms), then $\varphi f\in\mathcal H^0$. Hence,
			\[(W\varphi f)(\delta)=\varphi f.\]
		On the other hand, given $F\in\mathcal F$, the function $JF(\delta)=\varphi F(\delta)$ is in $\mathcal F$ since $\varphi$ is a multiplier of $\mathcal H^0$. The HLS result with $\delta$ in place of $0$ gives, 
			\[\begin{split}
				\|JF\|^2&=\int_0^\infty\|\varphi F(\delta)\|_\delta^2 d\mu(\delta)\\	
				&\leq\int_0^\infty|\varphi|_{\Omega_\delta}^2\|F(\delta)\|_\delta^2 d\mu(\delta)\\
				&\leq|\varphi|_{\Omega_0}^2\int_0^\infty\|F(\delta)\|^2_\delta d\mu(\delta)\\
				&=|\varphi|_{\Omega_0}^2\|F\|^2.
			\end{split}\]
		Thus $J$ extends to a bounded operator on $\mathcal F^2$ with $\|J\|\leq|\varphi|_{\Omega_0}$. To complete the proof, observe that 
			\[WM_\varphi =JW,\]
		from which we can see that
			\[\|M_\varphi \|\leq\|J\|.\]
	\end{proof}

\section{Examples}
\label{sec:examples}

 	We close this article by looking at some examples of weight sequences defined by certain well-known arithmetic functions. We've already seen that -- with the exception of weights coming from point-masses --
	completely multiplicative sequences can't arise from a measure like McCarthy's. The Jensen's inequality argument used in the introduction doesn't carry over to the case when the weights are multiplicative, but not completely so.\\

	\begin{definition}
		\label{divisor def}The divisor function -- denoted by $d(n)$ -- gives the number of positive divisors of the natural number $n$. If $n=p_{1}^{\alpha_{1}}\cdots p_{m}^{\alpha_{m}}$, then as we saw in the introduction, 
			\[d(n)=\prod_{j=1}^{m}(\alpha_{j}+1).\]
		\label{sum or divisors def}The sum of divisors function -- denoted by $\sigma(n)$ -- gives the sum of the divisors of the natural number $n$. If $n=p_{1}^{\alpha_{1}}\cdots p_{m}^{\alpha_{m}}$, then
			\[\sigma(n)=\prod_{j=1}^{m}\left(\frac{p_{j}^{\alpha_{j}+1}-1}{p_{j}-1}\right).\]
		\label{Euler totient def}The Euler totient function -- denoted by $\phi(n)$ -- tells how many positive integers less than $n$ are coprime to $n$. If $n=p_{1}^{\alpha_{1}}\cdots p_{m}^{\alpha_{m}}$, then
			\[\phi(n)=n\prod_{j=1}^{m}\left(1-\frac{1}{p_{j}}\right).\]
	\end{definition}

	Each of these functions is multiplicative; none are completely so. In this section, we will be working with sequences given by the reciprocals of these functions. Any such sequence is automatically multiplicative.  \\

	\begin{theorem}
		\label{thm:other isometries}
		If $w_n$ is given by the reciprocal of $d(n)$, $\sigma(n)$ or $\phi(n)$, then there is no positive measure $\mu$ such that
			\begin{equation}
				w_n=\int_{0}^\infty n^{-2\sigma}d\mu(\sigma).
			\end{equation}
		On the other hand, in each case there is a $\delta$ such that $\mathfrak{M}$ is isometrically isomorphic to $H^\infty(\Omega_\delta)\cap\mathcal D$.
	\end{theorem}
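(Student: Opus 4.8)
The plan is to prove the two halves separately. For the non-existence of a generating measure I would argue by contradiction: if $w_n=\int_0^\infty n^{-2\sigma}\,d\mu(\sigma)$ for a positive measure $\mu$, then $\mu([0,\infty))=w_1=1$ in all three cases, so $\mu$ is a probability measure. Fix a prime $p$ and push $\mu$ forward along the decreasing homeomorphism $\sigma\mapsto p^{-2\sigma}$ of $[0,\infty)$ onto $(0,1]$; the image measure $\nu_p$ lives on $[0,1]$ and has moments $\int_0^1 t^k\,d\nu_p(t)=w_{p^k}$ for every $k\ge0$. Thus $(w_{p^k})_{k\ge0}$ must be a Hausdorff moment sequence, and in particular log-convex, $w_{p^k}^2\le w_{p^{k-1}}w_{p^{k+1}}$ for $k\ge1$, by the Cauchy--Schwarz inequality. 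For the totient this already fails: $\phi(1)=\phi(2)=1$ and $\phi(4)=2$, so with $p=2$ and $k=1$ one gets $w_2^2=1>\tfrac12=w_1w_4$, and no such $\mu$ can exist.

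For $d$ and $\sigma$ the prime-power sequences are genuine moment sequences, so log-convexity is not violated and a finer argument is required. I would exhibit the representing measures explicitly: $w_{p^k}=\tfrac1{k+1}=\int_0^1 t^k\,dt$ in the case of $d$, and $w_{p^k}=\tfrac{p-1}{p^{k+1}-1}=\int_0^1 t^k\,d\big(\sum_{j\ge1}(p-1)p^{-j}\delta_{p^{-j}}\big)(t)$ in the case of $\sigma$. Since the Hausdorff moment problem on the compact interval $[0,1]$ is determinate (polynomials being dense in $C[0,1]$), $\nu_p$ must coincide with the measure just displayed; pulling back through $\sigma\mapsto p^{-2\sigma}$ then determines $\mu$ outright, and the answer depends on $p$ --- namely $d\mu(\sigma)=2(\log p)\,p^{-2\sigma}\,d\sigma$ for $d$, and $\mu=\sum_{j\ge1}(p-1)p^{-j}\delta_{j/2}$ for $\sigma$. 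A single $\mu$ cannot serve every prime at once, which is the contradiction. The only non-routine ingredient here is the determinacy of the moment problem together with the bookkeeping of the pushforwards.

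For the isometric identification of $\mathfrak M$ the plan is to supply the correct exponent to Corollary \ref{corollary} (equivalently its restatement in Remark \ref{rm:3 conditions}). Each of the three sequences is multiplicative, being the reciprocal of a multiplicative function, so one must only verify the growth bound $w_n^{-1}n^{-2\sigma}\le C_\sigma$ for $\sigma>\Delta$ and the prime-power inequality $w_{p^k}\le p^{-2\delta}w_{p^{k-1}}$, after which one takes $\delta=\Delta$. For $d$: the classical estimate $d(n)=O(n^\varepsilon)$ for every $\varepsilon>0$ gives the growth bound with $\Delta=0$, while $w_{p^k}/w_{p^{k-1}}=k/(k+1)<1$ gives the prime-power inequality with $\delta=0$; hence $\mathfrak M\equiv H^\infty(\Omega_0)\cap\mathcal D$. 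For $\sigma$: since $\sigma(n)\le n\,d(n)=O(n^{1+\varepsilon})$, the growth bound holds with $\Delta=\tfrac12$, and $w_{p^k}/w_{p^{k-1}}=\tfrac{p^k-1}{p^{k+1}-1}\le\tfrac1p$ (equivalently $p(p^k-1)\le p^{k+1}-1$) is precisely the prime-power inequality with $\delta=\tfrac12$; hence $\mathfrak M\equiv H^\infty(\Omega_{1/2})\cap\mathcal D$.

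The totient is the hard case, and I expect it to be the main obstacle. The bound $\phi(n)\le n$ still supplies the growth condition with $\Delta=\tfrac12$, but the prime-power inequality of Theorem \ref{thm:upper bound} fails at $p=2$, $k=1$ because $\phi(1)=\phi(2)=1$ forces $\delta\le0$ there; consequently Theorems \ref{thm:growth rate theorem} and \ref{thm:upper bound} yield only the non-matching sandwich $|\varphi|_{\Omega_{1/2}}\le\|M_\varphi\|\le|\varphi|_{\Omega_0}$, and Corollary \ref{corollary} does not apply off the shelf. To pin $\mathfrak M$ down one has to do extra work at the prime $2$. A natural device is the isometric Hilbert-space factorization $\mathcal H^\textbf{w}\cong\mathcal A\otimes\mathcal B$ furnished by multiplicativity of $w$ --- with $\mathcal A$ the closed span of $\{2^{-ks}:k\ge0\}$ carrying the weights $w_{2^k}$, $\mathcal B$ the closed span of $\{m^{-s}:m\text{ odd}\}$ carrying the weights $w_m$, and the kernel factoring as $k^\textbf{w}=k^{\mathcal A}k^{\mathcal B}$: on $\mathcal B$ the weights already decay like $m^{-1}$ on prime powers, so the dilation argument of Theorem \ref{thm:upper bound} can be run there against the reference weight $m^{-1}$, whereas on $\mathcal A$ the operator $M_{2^{-s}}$ is a unilateral weighted shift and its norm is computed directly. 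Reassembling the two pieces, together with the matching sharpened lower bound, is what should isolate the $\delta$ of the statement; Theorem \ref{thm:surjection} then provides surjectivity of $M_\varphi\mapsto\varphi$ and hence the isometric isomorphism. Choosing the reference weight for the $2$-part correctly, and checking that the tensor factors recombine into a single clean $H^\infty(\Omega_\delta)$ statement, is where the real work lies.
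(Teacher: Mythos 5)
Your proof of the non-existence half is correct but takes a genuinely different route from the paper. The paper evaluates the putative measure along integer sequences: dominated convergence at $\sigma=0$ (primes versus squares of primes) for $d$, Gr\"onwall's theorem together with Fatou's lemma for $\sigma(n)$, and the $\limsup/\liminf$ of $\phi(n)/n$ for $\phi(n)$. Your argument pushes $\mu$ forward along $\sigma\mapsto p^{-2\sigma}$ and works entirely inside the Hausdorff moment problem on $[0,1]$: log-convexity ($w_2^2\le w_1w_4$, violated since $1>\tfrac12$) disposes of $\phi$, and determinacy on a compact interval forces $\nu_p$ to be Lebesgue measure (for $d$) or the geometric measure $\sum_{j\ge1}(p-1)p^{-j}\delta_{p^{-j}}$ (for $\sigma$), whose pullbacks depend on $p$; both identifications check out, and the resulting contradiction is sound. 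This buys a proof needing nothing beyond the values at prime powers (no Gr\"onwall, no growth estimates), at the mild cost of invoking moment-problem determinacy. Your treatment of $w_n=1/d(n)$ and $w_n=1/\sigma(n)$ in the second half coincides with the paper's: verify the growth bound and the prime-power inequality, then apply Corollary \ref{corollary} in the form of Remark \ref{rm:3 conditions} with $\delta=0$ and $\delta=\tfrac12$ respectively.

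The genuine gap in your proposal is the totient case, which you leave as a sketch --- but your diagnosis of why the machinery does not apply is correct, and the defect lies in the paper rather than in your plan of attack. The paper simply asserts $w_{p^k}\le p^{-1}w_{p^{k-1}}$ for $w_n=1/\phi(n)$; at $k=1$ this reads $\tfrac1{p-1}\le\tfrac1p$, which is false for every prime, and at $p=2$ it forces $\delta\le 0$, exactly as you observed. Moreover, the repair you propose (handling the prime $2$ by a tensor factorization) cannot close the gap, because no exponent $\delta$ exists at all: each $M_{p^{-s}}$ sends the orthogonal vectors $n^{-s}$ to $(pn)^{-s}$, so $\|M_{p^{-s}}\|^2=\sup_n w_{pn}/w_n=\sup_n \phi(n)/\phi(pn)=\tfrac1{p-1}$. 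Thus $\|M_{2^{-s}}\|=1$, so an isometric symbol map requires $2^{-\delta}=1$, i.e.\ $\delta=0$; but $\|M_{3^{-s}}\|=2^{-1/2}<1=|3^{-s}|_{\Omega_0}$, so $\delta=0$ is impossible. Hence the map $M_\varphi\mapsto\varphi$ is not isometric onto $H^\infty(\Omega_\delta)\cap\mathcal D$ for any $\delta$ when $w_n=1/\phi(n)$, and the totient part of the statement, as the paper proves it via the symbol map, cannot be rescued by extra work at the prime $2$; it needs to be amended (for instance, restricted to $d$ and $\sigma$), while your sandwich $|\varphi|_{\Omega_{1/2}}\le\|M_\varphi\|\le|\varphi|_{\Omega_0}$ is the most that Theorems \ref{thm:growth rate theorem} and \ref{thm:upper bound} actually give there.
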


	Before moving into the proof of this theorem, some preliminaries are in order.

	\begin{lemma} For each natural number $n$, 
		\label{lem:inequality}
			\[\frac{6}{\pi^2}<\frac{\sigma(n)\phi(n)}{n^2}<1.\]
	\end{lemma}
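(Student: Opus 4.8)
The plan is to reduce the whole lemma to one clean multiplicative identity. Writing $n = p_1^{\alpha_1}\cdots p_m^{\alpha_m}$ in its prime factorization, I would first record the prime-power computation
\[
\frac{\sigma(p^{\alpha})\,\phi(p^{\alpha})}{p^{2\alpha}}
=\frac{p^{\alpha+1}-1}{p^{\alpha}(p-1)}\cdot\frac{p-1}{p}
=1-\frac{1}{p^{\alpha+1}},
\]
using the product formulas for $\sigma$ and $\phi$ from Definition \ref{divisor def}. Since $\sigma$, $\phi$, and $n\mapsto n^{2}$ are each multiplicative, so is $n\mapsto \sigma(n)\phi(n)/n^{2}$, and therefore
\[
\frac{\sigma(n)\,\phi(n)}{n^{2}}=\prod_{j=1}^{m}\left(1-\frac{1}{p_j^{\alpha_j+1}}\right).
\]
Both inequalities of the lemma will be read off from this product.

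For the upper bound: when $n\ge 2$ the product is nonempty and each factor lies strictly between $0$ and $1$, so the product is $<1$. (For $n=1$ the middle quantity is exactly $1$; I would either state the right-hand inequality as $\le 1$ or simply restrict to $n\ge 2$, which is the only case used for the weight sequences.) For the lower bound: each $\alpha_j\ge 1$, so $p_j^{\alpha_j+1}\ge p_j^{2}$, whence
\[
\prod_{j=1}^{m}\left(1-\frac{1}{p_j^{\alpha_j+1}}\right)
\ \ge\ \prod_{j=1}^{m}\left(1-\frac{1}{p_j^{2}}\right)
\ >\ \prod_{p\text{ prime}}\left(1-\frac{1}{p^{2}}\right),
\]
the last step because the finite product over the distinct primes $p_1,\dots,p_m$ is obtained from the (positive, absolutely convergent) infinite product by deleting factors, each of which is strictly less than $1$. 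Finally, the Euler product for the Riemann zeta function together with the standard evaluation $\zeta(2)=\pi^{2}/6$ gives $\prod_{p}\bigl(1-p^{-2}\bigr)=1/\zeta(2)=6/\pi^{2}$, which closes the lower bound.

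There is no real obstacle here: the only point requiring any thought is noticing that $\sigma(n)\phi(n)/n^{2}$ telescopes to $\prod_{p^{\alpha}\,\|\,n}\bigl(1-p^{-\alpha-1}\bigr)$. Once that identity is in hand, the upper bound is immediate and the lower bound is a one-line comparison against the full Euler product for $\zeta(2)$.
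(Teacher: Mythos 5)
Your proof is correct, and there is in fact nothing in the paper to compare it against: Lemma \ref{lem:inequality} is stated without proof (it is the classical estimate appearing in Hardy and Wright \cite{Ha}, where it is proved for $n>1$). Your route --- the prime-power identity $\sigma(p^{\alpha})\phi(p^{\alpha})/p^{2\alpha}=1-p^{-(\alpha+1)}$, multiplicativity of $\sigma(n)\phi(n)/n^{2}$, and comparison of the resulting finite product with the Euler product $\prod_{p}(1-p^{-2})=1/\zeta(2)=6/\pi^{2}$ --- is exactly the standard argument, and all steps check out, including the strictness of the comparison with the infinite product (deleting factors each strictly less than $1$ from a convergent positive product strictly increases it). Your remark about $n=1$ is a genuine, if minor, correction to the statement as printed: there $\sigma(1)\phi(1)/1^{2}=1$, so the strict upper bound fails and the lemma should either be restricted to $n>1$ or have $\le 1$ on the right, while the lower bound $6/\pi^{2}<\sigma(n)\phi(n)/n^{2}$ holds for all $n\ge 1$ by your empty-product observation. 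This does not affect the only use of the lemma in the paper, since in the proof of Theorem \ref{thm:other isometries} it is applied along sequences $n_i\to\infty$.
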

 
	\noindent\textbf{Growth Rates:} 
		The following growth rates, the first of which is known as Gr\"{o}nwall's Theorem, hold.\\
		
			\noindent 1. Gr\"{o}nwall's Theorem:
				\begin{equation}
					\label{eq:Gronwall}
						\limsup_n\frac{\sigma(n)}{n\ln\ln n}=e^{\gamma}
				\end{equation}
			where $\gamma$ is the Euler-Mascheroni constant.\\
			2. For each $\epsilon>0$, 
				\begin{equation}
					\label{eq:sigma}
						\sigma(n)=O(n^{1+\epsilon}).
				\end{equation}
			3.
				\begin{equation}
					\label{eq:phi 1} 
						\limsup_n\frac{\phi(n)}{n}=1; 
				\end{equation}
			4. 
				\begin{equation}
					\label{eq:phi 2}
						\liminf_n\frac{\phi(n)}{n}=0.
				\end{equation}
	
	\noindent We are now in a position to prove Theorem \ref{thm:other isometries}. 

	\begin{proof}

		Suppose first that $w_n=\frac{1}{d(n)}$. Assume by way of contradiction that there is some positive measure $\mu$ such that
			\[w_n=\frac{1}{d(n)}=\int_{0}^\infty n^{-2\sigma}d\mu(\sigma).\]
		Let $\{p_1<p_2<p_3<\cdots\}$ be the primes for which this integral is finite. Then
			\[\frac{1}{2}=\frac{1}{d(p_j)}=\int_0^\infty p_j^{-2\sigma}d\mu(\sigma).\]
		Letting $j\to\infty$ and using the dominated convergence theorem, we see that $\frac{1}{2}=\mu(\{0\})$. Now, apply the same argument with the sequence $\{p_1^2<p_2^2<p_3^2<\cdots\}$, along with the fact that
		$\frac{1}{d(p_j^2)}=\frac{1}{3}$ to get $\mu(\{0\})=\frac{1}{3}$ -- a contradiction.\\
	
		Suppose now that  $w_n=\frac{1}{\sigma(n)}$ and again, assume by way of contradiction that $w_n$ is given by a measure as in Equation \eqref{eq:McCarthy's weights}. Gr\"{o}nwall's theorem implies that there is some sequence 
		$\{n_j\}_{j=1}^\infty $ such that
			\[\lim_{j\to\infty}\frac{n_j\ln\ln n_j}{\sigma(n_j)}=e^{-\gamma}.\]
		Hence
			\begin{align*}
				e^{-\gamma}&=\lim_{j\to\infty}\frac{n_j\ln\ln n_j}{\sigma(n_j)}\\
				&=\lim_{j\to\infty}\int_0^\infty n_j^{1-2\sigma}\ln\ln n_j d\mu(\sigma)\\
				&\geq\lim_{j\to\infty}\int_{[0,1/2]}n_j^{1-2\sigma}\ln\ln n_jd\mu(\sigma)\\
				&\geq\lim_{j\to\infty}\mu([0,1/2])\ln\ln n_j.
			\end{align*}
		It follows that $\mu([0,1/2])=0$.\\

		Since
			\[\frac{6}{\pi^2}<\frac{\sigma(n)\phi(n)}{n^2}<1\]
		(by Lemma \ref{lem:inequality}), \eqref{eq:phi 1} implies the existence of a sequence $\{n_i\}_{i=1}^\infty$ such that
			\[\frac{6}{\pi^2}\leq\frac{\sigma(n_i)}{n_i}\leq 2,\]
		for each $i$, whence 
			\[2\leq\limsup_i\frac{n_i}{\sigma(n_i)}.\]
		Thus
			\begin{align*}
				2&\leq\limsup_i\frac{n_i}{\sigma(n_i)}\\
				&=\limsup_i\int_0^\infty n_i^{1-2\sigma}d\mu(\sigma)\\	
				&=\limsup_i\left(\int_{[0,1/2]}n_i^{1-2\sigma}d\mu(\sigma)+\int_{(1/2,\infty)}n_i^{1-2\sigma}d\mu(\sigma)\right)\\
				&=\limsup_i\left(\int_{(1/2,\infty)}n_i^{1-2\sigma}d\mu(\sigma)\right)\\
				&=0,
			\end{align*}
		with the second-to-last step coming from the fact that $\mu([0,1/2])=0$ and the last step coming from Fatou's lemma. This contradiction proves that $w_n$ so defined can't come from a measure.\\

		Finally, let's $w_n=\frac{1}{\phi(n)}$ and again, suppose by way of contradiction, that there is a $\mu$ satisfying Equation \eqref{eq:McCarthy's weights}. By \eqref{eq:phi 1}, there is a sequence $\{n_i\}_{i=1}^\infty $ such that
			\[\lim_{i\to\infty}\frac{n_i}{\phi(n_i)}=1.\]
		It follows that
			\begin{align*}
				1&=\lim_{i\to\infty}\frac{n_i}{\phi(n_i)}\\
				&=\lim_{i\to\infty}\int_0^\infty n_i^{1-2\sigma}d\mu(\sigma)\\
				&=\lim_{i\to\infty}\left(\int_{[0,1/2)}n_i^{1-2\sigma}d\mu(\sigma)+\mu(\{1/2\})+\int_{(1/2,\infty)}n_i^{1-2\sigma}d\mu(\sigma)\right).
			\end{align*}
		Now, the first integral in the parentheses must be 0 and the second integral in the parentheses must tend to 0 by the dominated convergence theorem, so that $\mu(\{1/2\})=1$. However, running through the same 
		argument with a sequence furnished by \eqref{eq:phi 2}  shows that $\mu(\{1/2\})=\infty$ -- a contradiction.\\

		So, if $w_n$ is given by the reciprocal of any one of the three aforementioned multiplicative functions, then it can't come from any measure as in Equation \eqref{eq:McCarthy's weights}.\\

		We now move to the second claim of Theorem \ref{thm:other isometries} -- that in each case, there is a $\delta$ such that $\mathfrak{M}$ is isometrically isomorphic to $H^\infty(\Omega_\delta)\cap\mathcal D$. Let's first examine the case where 
		$w_n=\frac{1}{d(n)}$. In this case, $d(n)=o(n^\epsilon)$ for each positive $\epsilon$ (see \cite{Ha}). So, for each $\sigma>0$, $w_n^{-1}n^{-2\sigma}\to 0$ as $n\to\infty$, and we can certainly find a $C_\sigma$ such that 
		$w_n^{-1} n^{-2\sigma} \le C_\sigma$ for all $n$. The other conditions in Remark \ref{rm:3 conditions} are satisfied: $w_n$ is clearly multiplicative and $w_{p^{k}}=\frac{1}{d(p^{k})}=\frac{1}{k+1}$ is decreasing with $k$ for each prime $p$. 
		Therefore,
			\[\mathfrak{M}\equiv H^\infty(\Omega_0)\cap\mathcal D.\]
	
		If instead $w_n=\frac{1}{\phi(n)}$, then the conditions of of Remark \ref{rm:3 conditions} are satisfied with $\delta=\frac{1}{2}$  as from \eqref{eq:phi 1}: There is some $K>0$ such that $\frac{\phi(n)}{n}\leq K$, which implies that
			\[\phi(n)n^{-1}=\phi(n)n^{-2(1/2)}<K\]
		for each $n$, $w_n$ is multiplicative and $w_{p^k}\leq p^{-1}w_{p^{k-1}}$. Thus
			\[\mathfrak{M}\equiv H^\infty(\Omega_{1/2})\cap\mathcal D.\]

		Finally, suppose that $w_n=\frac{1}{\sigma(n)}$. To see that the conditions of Remark \ref{rm:3 conditions} are satisfied when $\delta=\frac{1}{2}$, note that $w_n$ is multiplicative and $w_{p^k}=\frac{p-1}{p^{k+1}-1}\leq p^{-1}\frac{p-1}
		{p^k-1}$ for each $k$ and each prime $p$. Now, for each $\epsilon>0$, \eqref{eq:sigma} implies the existence of some postive number $K_\epsilon$ such that $\frac{\sigma(n)}{n^{1+\epsilon}}\leq K_\epsilon$ for each $n$, so that
			\[\sigma(n)n^{-2\left(\frac{1}{2}(1+\epsilon)\right)}<K_\epsilon.\]
		Hence
			\[\mathfrak{M}\equiv H^\infty(\Omega_{1/2})\cap\mathcal D.\]
	\end{proof} 

\newpage

\end{document}